\documentclass[a4paper,12pt]{article}
\usepackage{amsmath,amssymb,amsfonts,amscd,amsthm}
\usepackage{graphicx}
\usepackage{dsfont}
\usepackage{times}
\usepackage{a4wide}
\usepackage{blindtext}
\usepackage[titletoc, toc, title, page]{appendix}
\usepackage{color}
\usepackage{enumerate}
\usepackage{fancyhdr}		
\usepackage{pgf,tikz}
\usepackage{sectsty} \definecolor{bleu1}{RGB}{0,57,128}
\usepackage[utf8]{inputenc}
\usepackage{amsfonts}
\usepackage{bbm}
\usepackage[english]{babel}
\usepackage[pagewise]{lineno}
\allowdisplaybreaks

\usetikzlibrary{arrows}

\newtheorem{theorem}{Theorem}[section]                             
\newtheorem{lemma}[theorem]{Lemma}
\newtheorem{proposition}[theorem]{Proposition}

\newtheorem{remark}[theorem]{Remark}
\newtheorem{definition}[theorem]{Definition}

 \newtheorem{main}{Theorem}

\renewenvironment{proof}{\noindent {\bf Proof.}}{\qed\vskip 0.2cm}

\newcommand{\C}{\mathbb C}
\newcommand{\R}{\mathbb R}
\newcommand{\Q}{\mathbb Q}
\newcommand{\Z}{\mathbb Z}
\newcommand{\N}{\mathbb N}

\newcommand{\T}{\mathbb{T}}

\makeatletter
\newenvironment{proofof}[1]{\par
  \pushQED{\qed}%
  \normalfont \topsep6\p@\@plus6\p@\relax
  \trivlist
  \item[\hskip\labelsep
        \bfseries
    \textit{ Proof of #1\@addpunct{.}}]\ignorespaces
}{%
  \popQED\endtrivlist\@endpefalse
}
\makeatother

\renewcommand{\inf}{\mathop{\mbox{{inf}}}
\renewcommand{\sup}{\mathop{\mbox{sup}}}
\renewcommand{\lim}{\mathop{\mbox{lim}}}
\renewcommand{\max}{\mathop{\mbox{max}}}}
\renewcommand{\min}{\mathop{\mbox{min}}}

\providecommand{\abs}[1]{\lvert#1\rvert}		%Definició de valor absolut i norma
\providecommand{\norm}[1]{\lVert#1\rVert}

\setlength{\parindent}{0pt}					%Omet la sagnia de la primera línia de cada paràgraf!!!

\pagestyle{myheadings}
\makeatletter
\def\author#1{\gdef\autrun{\def\and{\unskip, }#1}\gdef\@author{#1}}
\def\address#1{{\def\and{\\\hspace*{18pt}}\renewcommand{\thefootnote}{}%
\footnote {#1}}}
%\markboth{\autrun}{\titrun}
\makeatother
\def\email#1{e-mail: #1}

\def\keywords#1{\par\medskip
\noindent\textbf{Keywords.} #1}

%\frenchspacing
% % 
%  \textwidth=14.5cm
%\textheight=22cm
%  \oddsidemargin=0.5cm
%  \evensidemargin=0.0cm
% % \right=2cm
%  \topmargin=0cm

\usepackage{caption}
\usepackage{pgf,tikz}
\usetikzlibrary{arrows}

\usepackage[colorlinks,linkcolor=bleu1,anchorcolor=green,citecolor=bleu1]{hyperref}
% \subsectionfont{\color{bleu1}}
% \sectionfont{\color{bleu1}}%\sectionrule{0pt}{0pt}{-5pt}{0.8pt}}
% \subsectionfont{\color{bleu1}}%\sectionrule{0pt}{0pt}{-5pt}{0.8pt}}
\usepackage{xcolor}

\definecolor{bluegray}{rgb}{0.4, 0.6, 0.8}

\usepackage{xcolor}

\setcounter{section}{0}

\numberwithin{equation}{section}
\begin{document}

\definecolor{ttzzqq}{rgb}{0.2,0.6,0}
\definecolor{uququq}{rgb}{0.25,0.25,0.25}
\definecolor{ccqqqq}{rgb}{0.8,0,0}
\definecolor{ttttff}{rgb}{0.2,0.2,1}

\baselineskip=17pt
\title{Analytic weakly mixing diffeomorphisms on 
odd dimensional spheres}
\date{}
\author{Gerard Farr\'e}
\maketitle
\address{G.Farré: KTH, Department of Mathematics; \email{gerardfp@kth.se}}

\begin{abstract}
%  The existence of volume preserving, real-analytic and uniquely ergodic diffeomorphisms was proved for odd dimensional spheres in \cite{FK} by using an approximation by conjugation method. 
 We present an approximation by conjugation scheme to obtain analytic diffeomorphisms of odd dimensional spheres that are weakly mixing with respect to the volume. 
 \keywords{Analytic diffeomorphisms, weak mixing, approximation by conjugation
method.}
\end{abstract}

\section{Introduction}

\label{sec_11}
This paper is devoted to the study of the phase space dependence of topological and measurable properties of dynamical systems, which aims at 
 identifying the class of manifolds that allow a map of a certain regularity to exhibit
 a given topological or ergodic property. In particular we present a constructive method to prove the existence of weakly mixing and analytic diffeomorphisms with respect to the volume on odd dimensional spheres.  
The type of methods that we use appeared first in \cite{AK}, where the authors proved 
%the existence of smooth and weakly mixing diffeomorphisms of the disc. 
that any compact, connected and smooth manifold supporting a non trivial free $\mathbb{S}^1$-action admits smooth and weakly mixing diffeomorphisms.
These constructions, which are usually referred to as \textit{Anosov-Katok} or approximation by conjugation (AbC) schemes, are performed using an iterative procedure starting with a simple function coming from the free $\mathbb{S}^1$-action, and the maps obtained at the end can be chosen to lie arbitrarily close to this initial map in the smooth topology.
For particular manifolds these results have been extended to the analytic category. In the case of unique ergodicity
specific constructions were carried out in \cite{Sap} for the two torus and in \cite{FK} for odd dimensional spheres. We also refer the reader to \cite{Kunde}, \cite{BANERJEE_2018} for other related results involving constructions of analytic diffeomorphisms on $d$-dimensional tori ($d\geq 2$) and to \cite{FK} for a discussion on the difficulties of extending the method for analytic maps.
Whether or not AbC methods can be used to obtain analytic weakly mixing diffeomorphisms on odd dimensional spheres becomes then a natural question. Before entering into details, let us recall how the \textit{Anosov-Katok} method works. For a complete and modern review on the method and some of its applications, we refer again the reader to \cite{faka}.

Given a connected, compact and smooth manifold $\mathcal{M}$ (equipped with a smooth measure $\mu$) admitting a non trivial smooth free $\mathbb{S}^1$-action $\{S_t\}_{t\in \T}$,
one constructs a sequence of $\mu$-preserving diffeomorphisms $H_n:\mathcal{M}\to \mathcal{M}$ and ${\{\alpha_n\}\subset \Q}$ in such a way that
\begin{equation}
\label{eq_intro}
 F=\lim_{n\to \infty} F_n, \quad  F_n= H_n^{-1} \circ S_{\alpha_{n+1}} \circ H_n
\end{equation} 
 satisfies the ergodic property of interest. It is due to an appropriate inductive choice of ``wild'' conjugacies $H_n$ and a fast converging sequence $\{\alpha_n\}$  that one can induce the desired ergodic behaviour for the limit map while ensuring the convergence of $\{F_n\}$. 
On the other hand the sequence $\{S_{\alpha_n}\}$ also tends to  $S_{\tilde{\alpha}}$ for some $\tilde{\alpha}\in \T$. Thus $\{H_n\}$ cannot converge, since otherwise $F$ would be conjugate to $S_{\tilde{\alpha}}$ and the construction would become trivial. Nevertheless $F$ lies in the closure of the space of maps conjugate to $S_{\tilde{\alpha}}$,
\begin{equation*}
 \mathcal{A}_{\tilde{\alpha}}(\mathcal{M})= \{H^{-1} \circ S_{\tilde{\alpha}} \circ H \; | \; H \in \text{Diff}_{\mu}(\mathcal{M})\},
\end{equation*}
where we have denoted by $\text{Diff}_{\mu}(\mathcal{M})$ the space of diffeomorphisms of $\mathcal{M}$ preserving $\mu$.
Thus the \textit{Anosov-Katok} method can be used to obtain maps with ergodic properties lying on the closure of a space of integrable maps.  An illustrative example of this fact can be found in \cite{FS}, where smooth area preserving weakly mixing maps of the disc $F\in \overline{\mathcal{A}_{\tilde{\alpha}}(\mathbb{D})}$ were built with $\tilde{\alpha}$ an arbitrary Liouville number. Recall that given a measure space $(\mathcal{M}, \mathcal{B}, \mu)$, a $\mu$-preserving map $F:\mathcal{M}\to \mathcal{M} $ is said to be weakly mixing if for all $A, B \in \mathcal{B}$ we have 
\begin{equation*}
 \lim_{n\to \infty}{} \frac{1}{n}\sum_{j=0}^{n-1}{\abs{\mu(F^{-j}(A)\cap B)- \mu(A)\mu(B)}}=0,
\end{equation*}
or equivalently if $F$ possesses no nonconstant eigenfunctions. We will use instead, as it was done in \cite{FS}, the following characterization of weak mixing (see \cite{Sklover}).
\begin{definition}[Weak mixing]
 A $\mu$-preserving diffeomorphism $F:\mathcal{M} \rightarrow \mathcal{M}$ is weakly mixing if there exists an increasing sequence $\{m_n\}_n\subset \N$ such that for any pair of measurable sets $A,B \in \mathcal{B}$ 
 \begin{equation}
 \label{eq_weak_mix}
  \lim_{n\to \infty} \mu(A\cap F^{-m_n}(B)) = \mu(A)\mu(B).
 \end{equation}
\end{definition}
In our case $\mathcal{M}= \mathbb{S}^3$ and $\mu$ is the volume.
 We  identify $\mathbb{S}^3$ as a subset of $\C^2$ and then define, for any $\alpha \in \T$, the map $\varphi_{\alpha}: \mathbb{S}^3 \to \mathbb{S}^3$,
\begin{equation}
\label{eq_per_action}
 \varphi_{\alpha}(z_1, z_2)=(e^{2\pi i \alpha}z_1, e^{2\pi i \alpha}z_2).
\end{equation}
The result that we prove is the following.
\begin{main}
\label{thm_main}
 For any $\Delta>1,\;  \varepsilon>0$ and $\alpha \in \T$, there exists a volume preserving and weakly mixing $F\in \text{Diff}^{\omega}_{\Delta}(\mathbb{S}^3))$ such that $\abs{F-\varphi_{\alpha}}_{\Delta}<\varepsilon.$
\end{main}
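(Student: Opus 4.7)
The plan is to implement the analytic Anosov-Katok approximation by conjugation scheme on $\mathbb{S}^3$, refining the unique-ergodicity construction of \cite{FK} to force a weakly mixing limit. I would inductively construct rationals $\alpha_n=p_n/q_n$ converging to some $\tilde\alpha$ close to $\alpha$, volume-preserving real-analytic diffeomorphisms $h_n:\mathbb{S}^3\to\mathbb{S}^3$ each commuting with $\varphi_{\alpha_n}$, and set $H_n=h_n\circ\cdots\circ h_1$, $F_n=H_n^{-1}\circ\varphi_{\alpha_{n+1}}\circ H_n$. Because $h_n$ commutes with $\varphi_{\alpha_n}$, one has $H_n^{-1}\varphi_{\alpha_n}H_n=F_{n-1}$, so
\[
 F_n-F_{n-1}=H_n^{-1}\circ(\varphi_{\alpha_{n+1}}-\varphi_{\alpha_n})\circ H_n,
\]
which on the strip of width $\Delta_n$ is controlled by $|\alpha_{n+1}-\alpha_n|\cdot|DH_n|_{\Delta_n}$. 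Taking $\alpha_1$ within $\varepsilon/2$ of $\alpha$, choosing $|\alpha_{n+1}-\alpha_n|$ small enough relative to $|DH_n|_{\Delta_n}$, and letting $\Delta_n\searrow\Delta$, telescoping yields $F_n\to F$ in $\text{Diff}^{\omega}_{\Delta}(\mathbb{S}^3)$ with $|F-\varphi_\alpha|_{\Delta}<\varepsilon$.

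To encode weak mixing I fix a countable algebra $\mathcal{A}$ generating the Borel $\sigma$-algebra on $\mathbb{S}^3$, and at stage $n$ pre-assign a finite subcollection $\mathcal{A}_n\subset\mathcal{A}$ together with an integer $m_n$. Since $F_n^{m_n}=H_n^{-1}\circ\varphi_{m_n\alpha_{n+1}}\circ H_n$, the conjugation $h_{n+1}$ and the parameters $\alpha_{n+2},m_{n+1}$ should be chosen so that $m_{n+1}\alpha_{n+2}\bmod 1$ realizes a translation along the orbits of $\varphi_{\alpha_{n+1}}$ which, when conjugated by $H_{n+1}$, moves any $A\in\mathcal{A}_n$ into a set equidistributed, up to error $1/n$, against any $B\in\mathcal{A}_n$ in the sense of \eqref{eq_weak_mix}. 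Concretely I would adapt the $q_n$-equivariant partitions of $\mathbb{S}^3$ from \cite{FK}, and design $h_{n+1}$ as an analytic ``shear along Hopf fibers'' such that the $H_{n+1}$-image of each cell is a long thin tube winding densely and nearly uniformly through $\mathbb{S}^3$ transverse to the $\varphi_{\alpha_{n+1}}$-orbits. A Fubini-type argument then gives the desired equidistribution after the additional circle translation by $m_{n+1}\alpha_{n+2}$. Combined with the analytic convergence above, a diagonal argument delivers \eqref{eq_weak_mix} for $F$ along $\{m_n\}$.

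The principal obstacle is constructing the $h_n$ analytically. In the $C^\infty$ AbC one localises stretching by bump functions, but in the analytic category every conjugation is global, and the stretching factor needed for mixing grows with $q_n$, forcing $|h_n|_{\Delta_n}$ to blow up. Following \cite{FK}, I would write each $h_n$ as an explicit finite composition of elementary analytic maps on $\mathbb{S}^3\subset\mathbb{C}^2$, built to commute with $\varphi_{\alpha_n}$ by construction (using $\varphi_{\alpha_n}$-equivariant functionals of $(z_1,z_2)$), and bound $|h_n|_{\Delta_n}$ via Cauchy estimates in a complexification of slightly larger width. The tension between stretching (for weak mixing) and norm control (for analytic convergence) is absorbed by choosing $q_{n+1}$ doubly exponentially large relative to $|h_{n+1}|_{\Delta_n}$, which simultaneously ensures $\sum_n|F_n-F_{n-1}|_{\Delta_n}<\infty$ and the validity of \eqref{eq_weak_mix} at each stage. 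Generating \emph{transverse} mixing rather than mere permutations of cells, as was sufficient for unique ergodicity in \cite{FK}, is the main new difficulty to overcome.
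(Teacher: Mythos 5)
Your overall architecture is the right one and matches the paper's: an AbC scheme built on the unique-ergodicity construction of \cite{FK}, with conjugacies that commute exactly with the current rational rotation and convergence forced by taking $\alpha_{n+1}$ close to $\alpha_n$ relative to $\norm{\text{D}H_n}$. However, the two steps you leave as assertions are precisely the content of the proof, and as stated they have genuine gaps. First, the mixing mechanism: you ask $h_{n+1}$ to turn the $H_{n+1}$-image of each \emph{cell} into a thin tube winding nearly uniformly through $\mathbb{S}^3$, with a ``Fubini-type argument'' finishing the job. The paper does something different and more delicate: it introduces one explicit entire shear $g_{q,A}$ as in \eqref{eq_g}, commuting with $\varphi_{p/q}$, and shows (Lemma \ref{lemma_stretching}) that a carefully chosen iterate $\tilde m$ of the conjugated rotation $\Phi_{q,A,\tilde\alpha}=g_{q,A}^{-1}\circ\varphi_{\tilde\alpha}\circ g_{q,A}$ stretches the one-dimensional segments of a \emph{partial $q$-decomposition} uniformly in measure onto curves shadowing a diagonal circle $D_{\xi,c}$; weak mixing then comes from combining this with the \emph{approximate ergodicity} of the previous stage (via Lemma \ref{lemma_ergo}), which equidistributes $\varphi_{\alpha'}$-orbits under the old conjugacy among the coarse cells $\mathcal{C}_q$. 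This two-scale mechanism (new map stretches segments onto orbits, old conjugacy equidistributes orbits) is what makes the estimates closeable; your single requirement on images of full-dimensional cells is not something your proposed equivariant shears are shown to deliver, and nothing in your sketch quantifies the stretching (monotonicity away from the bad set $M_{q,c}$, the choice of the iterate $\tilde m$ in \eqref{eq_sequence_m}, the amplitude $A$ large relative to $q$ and small relative to $\tilde q$).

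Second, your passage to the limit is too quick: approximate equidistribution is verified for $F_n$ at time $m_n$, but weak mixing of $F$ needs control of $\abs{F^{m_n}-F_n^{m_n}}_{\Delta}$, not of $\abs{F-F_n}_{\Delta}$; smallness of the latter does not propagate through $m_n$ iterations for free. The paper handles this by (i) proving a criterion (Lemma \ref{lemma_wkmix_criteria}) in which the approximating maps are required to be mixing on the margin sets $B_{\pm\varepsilon}$, so that an error of size $\gamma\varepsilon_n q_n^{-3/2}$ in \eqref{eq_fast_mixing} can be absorbed into the margins, and (ii) building into the inductive step (Proposition \ref{prop_dunaalaltra}, item i)) closeness of the \emph{iterates} $F^j$ for all $j$ up to the previous mixing time, with $\varepsilon_n$ chosen as in \eqref{eq_cond_epsilon}. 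Your ``doubly exponential choice of $q_{n+1}$'' and ``diagonal argument'' gesture at this but do not supply the mechanism; without margins on the target sets and an iterate-level closeness requirement in the induction, the argument as written does not yield \eqref{eq_weak_mix} for the limit map.
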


\begin{remark}
 The result holds as well for the case of any odd dimensional sphere excluding $\mathbb{S}^1$. The necessary changes in the proof for higher dimensional spheres are explained in Section \ref{sec_high}.
\end{remark}
% This map can be thought of as arising from fixing a specific time for a periodic action of the odd dimensional sphere.
% The reason why we restrict ourselves to odd dimensional spheres is mainly that such a periodic action does not exist in even dimensional spheres.
A precise definition of $\text{Diff}^{\omega}_{\Delta}(\mathbb{S}^3)$ and the analytic topology are given in Section \ref{sec_prelim}.
In our construction $S_{\tilde \alpha}=\varphi_{\tilde \alpha}$ for some irrational number $\tilde \alpha\in \T$ that we can choose arbitrarily close to $\alpha$, 
but we have no control over its arithmetic properties. The proof requires that  $\tilde \alpha$ is well approximated by rational numbers in such a way that the necessary conditions for the  convergence of $F_n$ are satisfied, so we cannot guarantee $\tilde \alpha$ to be any arbitrarily chosen Liouville number.
This problem already appeared in \cite{AK} and was later solved in \cite{FS}, 
where the authors proved that $\tilde \alpha$ could be any 
Liouville number by using more accurate estimates. 
The specific scheme that we use to prove Theorem \ref{thm_main} builds on  the construction in \cite{FK}. 
In that case a finite sequence of transitive actions on the sphere are used to build the conjugacies $H_n$ for the \textit{Anosov-Katok} scheme in such a way that the limit map is uniquely ergodic with respect to the volume. 
Our goal is to show that their construction can be modified in order to obtain weakly mixing diffeomorphisms on odd dimensional spheres.
% For this we will follow an  approach that resembles the one  used in \cite{FS}. 
% In order to do this we will compose the conjugacies $H_n$ with an additional map that rearranges the dynamics in a way that 
% the image of small segments in the phase space are stretched along some orbit of the original uniquely ergodic map (for a specific sequence of iterates). 
% We  show then as well that this is enough to obtain weak mixing. 
% It is not a novel nor surprising idea to modify a known existing ergodic map in order to obtain a new weakly mixing map. In a similar spirit reparametrization methods were used in \cite{Fayad} to produce examples of mixing flows in $\mathbb{T}^3$ relying also on ergodicity.

\subsection{Plan of the paper}

This work is divided in seven sections with the following content:
\begin{itemize}
 \item[i)] In Section \ref{sec_prelim} there is the necessary information concerning the topology of the space of maps that we consider, as well as the expression of the volume on the three sphere in appropriate coordinates. 

 In Section \ref{sec_decomp} we define two different types of decompositions into measurable subsets of the $3$-sphere which are used to verify that the approximating maps $\{F_n\}$ in \eqref{eq_intro} satisfy the properties that will lead to the limit map $F$ being weakly mixing. 

\item[ii)] Section \ref{sec_31} contains the definitions of \textit{approximate ergodicity} and \textit{approximate mixing}. Section \ref{sec_32} contains a lemma stating that if the maps of the sequence $\{F_n\}$ are approximately mixing and the sequence converges sufficiently fast, then the limit map is weakly mixing (see Lemma \ref{lemma_wkmix_criteria}).

\item[iii)] In Section \ref{sec_2} we explain the modification of the \textit{Anosov-Katok} scheme in \cite{FK} and give the explicit expressions of the maps involved. We also clarify why our modification is expected to yield the result claimed in Theorem \ref{thm_main}.  

\item[iv)] Section \ref{sec_proof} contains the proof of Theorem \ref{thm_main}, as well as the statement of the inductive proposition used to prove it, that is Proposition \ref{prop_dunaalaltra}.

\item[v)] In Section \ref{sec_unif_stre} stretching properties needed for the proof of Proposition \ref{prop_dunaalaltra} are defined and verified for certain maps used in our construction. Namely, we are referring to the map defined in equation \eqref{eq_phi} and it is verified in Lemma \ref{lemma_stretching}.

\item[vi)] Section \ref{sec_prop} contains the proof of Proposition \ref{prop_dunaalaltra}.

\item[vii)] Section \ref{sec_high} contains an indication on how to extend the proof to higher dimensional cases.
\end{itemize}

\subsection{Preliminaries and notation}
\label{sec_prelim}

As mentioned above, for the sake of simplicity we will restrict our study to the case of $\mathbb{S}^3$, although the proof can be extended to the case of any higher odd dimensional sphere up to minor modifications. In this section we introduce the basic properties that we use throughout the paper regarding the topology for the space of functions that we consider and the measure on the $3$-sphere.
\vspace{0.2cm}

\textbf{Analytic topology:} Let us consider $\mathbb{S}^{3}$ embedded in $\R^{4}$ and the standard complexification of $\R^{4}\subset \C^{4}$. Consider $f: \mathbb{S}^{3}\rightarrow \mathbb{S}^{3}$ such that each of its components is analytic. If the components of $f$ can be extended to  bounded holomorphic functions on the complex ball $B_{\Delta}=\{ z\in \C^{4}\; |\; \norm{z} < \Delta \}$ for a fixed $\Delta>1$ we write $f\in C^{\omega}_{\Delta}(\mathbb{S}^3)$. We will be considering the space of diffeomorphisms $\text{Diff}_{\Delta}^{\omega}(\mathbb{S}^3)\subset C^{\omega}_{\Delta}(\mathbb{S}^3)$, defined as the invertible maps in $C_{\Delta}^{\omega}(\mathbb{S}^3)$ such that their inverses also belong to $C_{\Delta}^{\omega}(\mathbb{S}^3)$. In this space we can define the distance between $f,g\in C^{\omega}_{\Delta}(\mathbb{S}^3)$ to be
\begin{equation*}
\abs{f-g}_{\Delta}= \max \left\{\sup_{z\in B_{\Delta}} \norm{f(z)-g(z)}, \; \sup_{z\in B_{\Delta}} \norm{f^{-1}(z)-g^{-1}(z)} \right\},
\end{equation*}
which makes $\text{Diff}^{\omega}_{\Delta}(\mathbb{S}^3)$ into a complete metric space.
 In case the components of such a function $f$ and its inverse can be extended to holomorphic functions in $\C^{4}$, we write  $f\in \text{Diff}^{\omega}_{\infty}(\mathbb{S}^3)$. For any $U\subset \C^{4}$, we define also
\begin{equation}
 \label{eq_norm_operador}
 \norm{\text{D}f}_{U}=\sup_{x\in U}\{ \max\{\norm{\text{D}f_x}, \norm{\text{D}f^{-1}_x}\}\},
\end{equation}
where the norms on the right hand side are the usual
operator norms
\begin{align*}
 & \norm{\text{D}f_x}=\sup\{\norm{\text{D}f_x v} \; |\; v\in \C^{4}, \; \norm{v}\leq 1\},\\
 & \norm{\text{D}f^{-1}_{x}}=\sup\{ \norm{\text{D}f^{-1}_x v} \; |\;   v\in \C^{4}, \; \norm{v}\leq 1\}.
\end{align*}

\vspace{0.2cm}

\textbf{Volume in Hopf coordinates:} We can identify $\mathbb{S}^{3}$ as a subset of $\mathbb{C}^2$, and write all points of the $3$-sphere in polar coordinates as $(z_1, z_2)=(r_1 e^{2\pi i \theta_1}, r_2 e^{2\pi i \theta_2})$, where the moduli for $z_1$ and $z_2$ satisfy the relation $r_1^2+r_2^2=1$, $0\leq r_1, r_2 \leq 1$. When using this notation it is convenient to parametrize $\mathbb{S}^3$ in Hopf coordinates,
\begin{align*}
 &z_1=\sin( \xi) e^{2\pi i \theta_1}, \\
 &z_2=\cos ( \xi)e^{2\pi i \theta_2},
\end{align*}
where $(\theta_1, \theta_2, \xi)\in \T^2 \times [0, \pi/2]$ and so
$r_1=\sin(\xi), \; r_2=\cos(\xi)$. 
% It follows from a computation that the metric induced by the Euclidean metric in $\mathbb{R}^4$ on $\mathbb{S}^3$ in these coordinates is 
% \begin{equation}
% \label{eq_metric}
%  ds^2=d\xi^2+ 4\pi^2 \sin^2(\xi)d\theta_1^2 + 4\pi^2 \cos^2(\xi) d\theta_2^2.
% \end{equation}
Let us denote the parametrization of the $3$-sphere with such coordinates by $\psi$. Then a set $B\subset \mathbb{S}^3$ is defined to be measurable if 
 $\psi^{-1}(B)$ is Lebesgue measurable in $\T^2 \times [0, \pi/2]$, and we denote the induced $\sigma$-algebra on $\mathbb{S}^3$ by $\mathcal{B}$. The volume form induced by the standard euclidean metric in $\R^4$ is
 $dV= 4\pi^2 \sin(\xi) \cos(\xi) \; d\theta_1 \wedge d\theta_2 \wedge d\xi$, and hence the volume of any measurable set $B\in \mathcal{B}$ can be computed as 
\begin{equation*}
\mu(B)=4\pi^2 \int_{\psi^{-1}(B)}{ \sin(\xi) \cos(\xi) \; d\theta_1 \; d\theta_2 \; d\xi}.
\end{equation*}
%It is worth noticing that $\mu(\mathbb{S}^3)= 2\pi^2$.
Notice as well that we can compute the measure of any set $B\subset \mathbb{S}^3$ such that \[\psi^{-1}(B)=B_1 \times B_2 \subset \T^2\times [0, \pi/2]\] with $B_1$ and $B_2$ Lebesgue measurable as
\begin{equation}
\label{eq_measure}
\mu(B)=\bar \lambda(B_1)\cdot \mu_r(B_2), \; \text{where} \; \; \mu_r(B_2):=4\pi^2 \int_{B_2}{\sin (\xi) \cos(\xi) \; d\xi}
\end{equation}
and $\bar \lambda$ denotes the Lebesgue measure on $\T^2$.
% Thus the volume in $\mathbb{S}^3$ coincides with the pushforward of the standard product measure of the Lebesgue measure on $\T^2$ and the measure $\mu_r$ on $[0, \pi/2]$ by $\psi$. 
%Notice that $(\mathbb{S}^3, \mathcal{B}, \mu)$ is a measure space. 
For any Lebesgue measurable set $B\subset \T^2 \times [0, \pi/2]$, we will denote (if the projection is measurable)
$$\lambda^i(B):=\lambda(\Pi_{\theta_i} B), \quad i=1,2 $$
where $\lambda$ is the Lebesgue measure on $\T$.
Finally we denote by $\text{Diff}^{\omega}_{\infty}(\mathbb{S}^{3}, \mu)$ the subset of $\text{Diff}^{\omega}_{\infty}(\mathbb{S}^3)$ consisting of volume preserving maps.

% \item For a map $f:\mathbb{S}^3\to \T$, $A\in \mathcal{B}$, we will
% denote $f(A)=I \; (\text{mod}\; 1)$ if $\lambda(\tilde f)(A)=1$ and $\tilde f(A) \; (\text{mod} \; 1)= [0,1)$, where $\tilde f$ denotes the lift of $f$.
 
%  which allows us to define what we understand by a weakly mixing map on $

\subsection{Decompositions into points}
\label{sec_decomp}
 The criterion that we use to obtain weak mixing, Lemma \ref{lemma_wkmix_criteria}, relies on the use of decompositions into points of the sphere. 
 Let us fix from this moment on that whenever we write $q\in \N$ it satisfies $q\geq 16$, and for any rational number $\alpha= p/q$, $p$ and $q$ are coprime and $q\geq 16$ as well.

% 
% \begin{definition}
%  A collection $\mathcal{C}=\{\mathcal{C}_n\}_{n\in \N}$ with $\mathcal{C}_n$ a finite collection of measurable sets is called sufficient if for any $\varepsilon>0$ and $A\in \mathcal{B}$, for $n$ big enough there exists $A'$ a disjoint union of sets in $\mathcal{C}_n$ such that 
%  \begin{equation}
%  \label{eq_symm}
%   \mu(A \Delta A')<\varepsilon.
%  \end{equation}
% \end{definition}

\begin{definition} For each $n\in \N$, consider a collection of disjoint sets $\eta_n$ on $\mathbb{S}^3$. We say that $\{\eta_n\}$ converges to the decomposition into points (we also denote this by $\eta_n \to \varepsilon$) if for any  $B\in \mathcal{B}$, for any $n$ there exists $B_n \in \mathcal{B}$, which is a union of elements in $\eta_n$ and such that $\lim_{n \to \infty}\mu(B \Delta B_n)=0$ (here $\Delta$ denotes the symmetric difference). 
\end{definition}

In a slight abuse of notation we will from now on work with sets on $\mathbb{S}^3$ in Hopf coordinates without making explicit reference to the parametrization. Let us introduce the two types of decompositions into points that we use for $(\mathbb{S}^3, \mathcal{B}, \mu)$.

\begin{definition}
\label{def_coll_sets}
 Given $q\in \N$, $q\geq 16$, define the collection of sets 
 $\mathcal{C}_q := \{B_q^{k,l,m}\}_{k,l,m}$ given by 
 \begin{align*}
   B_q^{k,l,m}:=\{&(\theta_1,\theta_2, \xi )\in \T^2 \times[0, \pi/2]\; |  \;  \theta_1 \in (kq^{-1}, (k+1)q^{-1}), \; \theta_2\in (lq^{-1}, (l+1)q^{-1}), \\
   & \sin(\xi) \in (\sqrt{m}\; q^{-1/2}, \sqrt{(m+1)}\; q^{-1/2})  \} 
 \end{align*}
for $k,l,m\in \{1, \ldots, q-2\}$.
\end{definition}
 
It follows from \eqref{eq_measure} that 
any set $B\in \mathcal{C}_q$ satisfies $\mu(B)=2\pi ^2 q^{-3}$, hence
given any increasing sequence $\{q_n\}\subset \N$,
\begin{equation}
\label{eq_dense_collection}
 \mathcal{C}= \{\mathcal{C}_{q_n}\}_{n\in \N}
\end{equation}
is a decomposition into points.

We will need yet another type of decomposition into points that we describe next. First, for any $q\in \N, \; q\geq 16$ and $c\in [0,1)$, let us define the subsets 
\begin{equation}
 \label{eq_nqc}
N_{q,c}:=\left\{(\theta_1,\theta_2)\in \T^2, \; \theta_2=-(1-q^{-1})\theta_1+c,\;  \theta_1\in [0,1) \right\}
\end{equation}
and 
 \begin{equation}
\label{eq_xi_decomp}
\mathcal{F}_q:=\{\xi \in [0, \pi/2] \; |\; \sin(\xi)\in [q^{-1}, 1-q^{-1}]\}.
\end{equation}

\begin{definition}
\label{def_partial_q_decomposition}
 For any $q\in \N$, we say that $\eta_q$ is a partial $q$-decomposition if 
 \begin{equation}
\label{eq_decomp_into_points}
\eta_{q}:=\{I \times \{\xi \} \in \tilde{\eta}_{q,c}\times \mathcal{F}_q  , \; c \in [0,1) \},
\end{equation} 
where $\tilde{\eta}_{q,c}=\{I_i\}_{i\in \sigma(q,c)}$ with $I_i \subset N_{q,c}\; \forall i\in \sigma(q,c)$ and 
$\sigma(q,c)$ a finite index set such that all $I_i \in \tilde{\eta}_{q,c}$ are connected, $I_i \cap I_j = \emptyset$ for any $i,j \in \sigma(q,c)$ with $i\neq j$ and 
\begin{align}
  & \sum_{i\in \sigma(q,c)}{\lambda^1(I_i)}\geq 1- 3/\sqrt{q},\label{eq_first_cond_dec}\\
  & \lambda^1(I_i)\leq q^{-3} \label{eq_second_cond_dec}, \; \forall i\in \sigma(q,c).
 \end{align}
\end{definition} 
It follows from the definition above that for any sequence $q_n \to \infty$, $\{\eta_{q_n}\}$ with 
 $\eta_{q_n}$ a partial $q_n$-decomposition satisfies $\eta_{q_n} \to \varepsilon$.
Using a more specific type of decomposition into points of this type will be later required, whose specific form will be defined in Section \ref{sec_unif_stre}. We will motivate in that section the additional properties that the partial $q_n$-decompositions $\eta_{q_n}$ need to satisfy.

\section{Weak Mixing}
\label{sec_3}
 \subsection{Approximate ergodicity and mixing}
\label{sec_31}
In order to describe how to obtain weak mixing by using an AbC method, it will be useful to introduce definitions that reflect the idea
of being approximately ergodic and approximately mixing with respect to the decompositions into points described in Section \ref{sec_decomp}. For $q\geq 16$ and $\mathcal{C}_q$ as in Definition \ref{def_coll_sets}
let us define first, for every set $B_q^{k.l,m}\in \mathcal{C}_q$,  $k,l,m \in \{1, \ldots, q-2\}$ and $0<\varepsilon<1$, the two sets with $\varepsilon$-sized margins $B_{q,\pm \varepsilon}^{k,l,m}$ contained and containing $B_q^{k.l,m}$, formally defined by
{\small
\begin{align*}
   B_{q,\pm \varepsilon}^{k,l,m}:= \{(\theta_1,&\theta_2, \xi ) \in \T^2 \times[0, \pi/2]\; |\; \theta_1 \in (kq^{-1} \mp \varepsilon q^{-1}/16, (k+1)q^{-1}\pm \varepsilon q^{-1}/16), \\
   & \hspace{3.75cm} \theta_2 \in ( lq^{-1} \mp \varepsilon q^{-1}/16, \; (l+1)q^{-1} \pm \varepsilon q^{-1}/16), \\
   & \hspace{1.3cm} \sin(\xi) \in (\sqrt{m/q}\;  \mp \sqrt{\varepsilon/(16q)}, \sqrt{(m+1)/q}\;  \pm \sqrt{\varepsilon/(16q)} \;)\}.
 \end{align*}
 }
 \hspace{-0.3cm} For the sake of simplicity we will rename the sets above, for any $B= B^{k,l,m}_q\in \mathcal{C}_q$, simply as $B_\varepsilon:=B^{k,l,m}_{q,\varepsilon}$.
 The reason why we introduce these sets is explained in Remark \ref{rem_why_sets}.
 Let us state some useful inequalities in  the form of the 
 following lemma. 
 \begin{lemma}
 \label{lemma_distancies}
    There exists a constant $\gamma >0$ such that for all $q\in \N$ and $\varepsilon>0$ sufficiently small, for any $B\in \mathcal{C}_q$, the sets $B_{\pm \varepsilon}$ satisfy the following inequalities:  \begin{align}
  &\mu(B_\varepsilon \Delta B_{-\varepsilon})<2\varepsilon \mu(B), \label{sim_diff_1}\\
  &\mu(B \Delta B_{\pm \varepsilon}) <\varepsilon \mu(B) \label{sim_diff_2},\\
  & \text{dist}(\partial B_{-\varepsilon}, \partial B) \geq \gamma\frac{\varepsilon}{q^{3/2}}, \label{eq_dis_bs1} \\
  &\text{dist}(\partial B, \partial B_{\varepsilon})\geq \gamma \frac{\varepsilon}{q^{3/2}}. \label{eq_dis_bs2}
  \end{align}
 \end{lemma}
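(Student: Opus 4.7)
The plan is to verify each of the four inequalities by a direct computation in the Hopf coordinates $(\theta_1,\theta_2,\xi)$, exploiting the product structure of $B$, $B_\varepsilon$ and $B_{-\varepsilon}$. By \eqref{eq_measure}, one factors
\[
\mu(B_{\pm\varepsilon}) = q^{-2}(1\pm\varepsilon/8)^2 \cdot 2\pi^2\bigl[q^{-1} \pm 2e(\sqrt{m/q}+\sqrt{(m+1)/q})\bigr],
\]
where $e=\sqrt{\varepsilon/(16q)}$ is the $\sin\xi$-margin; the $\xi$-factor comes from the identity $\mu_r(\sin\xi\in(a\mp e,b\pm e)) = 2\pi^2[(b\pm e)^2-(a\mp e)^2] = 2\pi^2[b^2-a^2\pm 2e(a+b)]$ applied with $a=\sqrt{m/q}$, $b=\sqrt{(m+1)/q}$.

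For \eqref{sim_diff_1} and \eqref{sim_diff_2}, I would use that $B_{-\varepsilon}\subset B\subset B_\varepsilon$ so the symmetric differences are literal differences, and subtract the product expressions above. The difference $\mu(B_\varepsilon)-\mu(B_{-\varepsilon})$ splits into a $\theta$-contribution, of relative size $\varepsilon/2$ (from the identity $(1+u)^2-(1-u)^2=4u$ with $u=\varepsilon/8$), and a $\xi$-contribution that is bounded using $\sqrt{m/q}+\sqrt{(m+1)/q}\leq 2$; provided $\varepsilon$ is taken small enough so that the $\xi$-contribution is absorbed by a multiple of $\varepsilon$, one closes the estimate $\mu(B_\varepsilon\Delta B_{-\varepsilon})<2\varepsilon\mu(B)$, and \eqref{sim_diff_2} follows from the same template with $B$ in place of one of the $B_{\mp\varepsilon}$.

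For \eqref{eq_dis_bs1}--\eqref{eq_dis_bs2}, I would note that in Hopf coordinates both $\partial B$ and $\partial B_{\pm\varepsilon}$ are unions of pieces of coordinate hyperplanes, so the Euclidean distance between corresponding boundaries is bounded below by the minimum over the three coordinate directions of the coordinate margin times the norm of the corresponding tangent vector of $\psi$. A direct differentiation gives $\|\partial_{\theta_1}\psi\|=2\pi\sin\xi$, $\|\partial_{\theta_2}\psi\|=2\pi\cos\xi$, $\|\partial_\xi\psi\|=1$, and the condition $k,l,m\in\{1,\ldots,q-2\}$ yields $\sin\xi,\cos\xi\geq q^{-1/2}$ on $B$. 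Hence the $\theta_i$-margins contribute at least $2\pi q^{-1/2}\cdot\varepsilon/(16q)=\pi\varepsilon/(8q^{3/2})$ to the Euclidean distance, and the $\xi$-margin contributes at least $\sqrt{\varepsilon/(16q)}\geq\varepsilon/q^{3/2}$ for $q\geq 16$ and small $\varepsilon$; taking $\gamma$ below $\pi/8$ makes the minimum of the three greater than $\gamma\varepsilon/q^{3/2}$.

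The delicate point is the bookkeeping in \eqref{sim_diff_1}: the $\xi$-margin produces a term of relative size $4eq(a+b)$, of order $\sqrt{\varepsilon q}$ rather than $\varepsilon$, in $(\mu(B_\varepsilon)-\mu(B_{-\varepsilon}))/\mu(B)$, so the hypothesis that $\varepsilon$ is sufficiently small must be read in a $q$-dependent fashion (essentially $\varepsilon q\ll 1$); once that is arranged the dominant $\theta$-contribution $\varepsilon/2$ absorbs the $\xi$-contribution and the estimate closes. The remaining inequalities are easier and follow directly from the same product-measure identity.
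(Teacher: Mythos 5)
Your overall route (direct computation of $\mu(B_{\pm\varepsilon})$ from the product formula \eqref{eq_measure} plus coordinate-wise distance estimates) is exactly the ``simple estimates'' the paper has in mind, and your treatment of \eqref{eq_dis_bs1}--\eqref{eq_dis_bs2} is fine. The problem is the point you yourself flag as delicate, and your proposed repair does not work. Under your literal reading of the $\sin\xi$-margins, the extra relative measure coming from the $\xi$-direction is $2eq\bigl(\sqrt{m/q}+\sqrt{(m+1)/q}\bigr)=\tfrac{\sqrt{\varepsilon}}{2}\bigl(\sqrt{m}+\sqrt{m+1}\bigr)$ per side, which is at least of order $\sqrt{\varepsilon}$ already for $m=1$ and fixed $q$. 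No smallness assumption on $\varepsilon$ (whether uniform or $q$-dependent, e.g.\ $\varepsilon q\ll 1$) can make a term of size $\sqrt{\varepsilon}\,\mu(B)$ smaller than $\varepsilon\mu(B)$ or $2\varepsilon\mu(B)$ as $\varepsilon\to 0$; the $\xi$-contribution dominates the $\theta$-contribution $\varepsilon/2$, not the other way around, so with that reading \eqref{sim_diff_1} and \eqref{sim_diff_2} are simply false for all small $\varepsilon$ and your argument cannot close. Moreover, even if a condition like $\varepsilon q\ll1$ did help, it would be unusable downstream: in the proof of Theorem \ref{thm_main} the margin $\varepsilon_{n-1}$ is fixed from $(\varepsilon_{n-2},q_{n-2})$ with no control on $q_{n-1}$, so the lemma must hold with an $\varepsilon$-threshold and a constant $\gamma$ uniform in $q$.

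The way to make the computation work (and evidently the intended meaning of the definition) is to take the $\varepsilon$-margins in the variable in which $\mu_r$ is linear, namely $\sin^2\xi$: the third condition should be read as $q\sin^2(\xi)\in(m\mp\varepsilon/16,\,m+1\pm\varepsilon/16)$, i.e.\ the $\mp\varepsilon/16$ sits inside the square roots. Since $\mu_r\{\sin\xi\in(a,b)\}=2\pi^2(b^2-a^2)$, one then gets exactly $\mu(B_{\pm\varepsilon})=(1\pm\varepsilon/8)^3\mu(B)$, so \eqref{sim_diff_1} and \eqref{sim_diff_2} follow for all $q$ once $\varepsilon$ is small (uniformly in $q$), and the $\xi$-faces of $\partial B$ and $\partial B_{\pm\varepsilon}$ are still separated by at least
\[
\sqrt{\tfrac{m+1\pm\varepsilon/16}{q}}-\sqrt{\tfrac{m+1}{q}}\;\geq\;\frac{\varepsilon}{32\sqrt{q}\sqrt{m+2}}\;\geq\;\frac{\varepsilon}{32\,q},
\]
which is larger than $\gamma\varepsilon q^{-3/2}$, so your distance estimates go through unchanged with the $q^{3/2}$ coming, as you say, from the $\theta$-margins $\varepsilon q^{-1}/16$ times $\sin\xi,\cos\xi\geq q^{-1/2}$.
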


 \begin{proof}
  The proof follows by using simple estimates to bound the distance and measure using \eqref{eq_measure}.
 \end{proof}

\begin{definition}[Approximate ergodicity]
\label{def_approx_ergodic}
 Given $\varepsilon >0, N\in \N$,  we say that a $\mu$-preserving map $f$ is
 $(q, \varepsilon, N)$-ergodic if for every $B\in \mathcal{C}_q$,
 $n \geq N$ and $x\in \mathbb{S}^3$
 \begin{equation}
 \label{eq_er}
  \bigg{|}\frac{1}{n}\sum_{k=0}^{n-1}{\mathbbm{1}_{B_{\pm \varepsilon}}(f^k(x))- \mu(B_{\pm \varepsilon})}\bigg{|}< 3\varepsilon \mu(B).
 \end{equation}
\end{definition}
\vspace{0.2cm}
Given  $\eta_{q'}$ a partial $q'$-decomposition with $q'>q$, $q'\in \N$ we can also define an analogous concept for weak mixing.
\begin{definition}[Approximate mixing]
\label{def_approx_mix}
 Given $H\in \text{Diff}^{\omega}_{\infty}(\mathbb{S}^3, \mu)$, $\varepsilon>0$ and $m\in \N$, a $\mu$-preserving map $F$ is
 $(q, q', \varepsilon, m,H)$-mixing if for every $B\in \mathcal{C}_q$ and $I\in \eta_{q'}$
\begin{equation}
 \bigg{|}\frac{\lambda^1(I \cap H(F^{-m}(B_{\pm \varepsilon})))}{\lambda^1(I)}-\mu(B_{\pm \varepsilon}) \bigg{|} \leq 30 \varepsilon \mu(B). \label{eq_crit_wmix2}
\end{equation}
\end{definition}

\begin{remark}
 It would be more intuitive to refer to the two properties above as approximate unique ergodicity and approximate weak mixing. We did not do this only in order to keep the notation short.
\end{remark}

\vspace{0.2cm}

\subsection{Criterion for weak mixing}
\label{sec_32}
% Let us start by giving the definition of what is a decomposition into points, and then introduce the appropriate Lemmas that lead to weak mixing relying on how our maps will act on these decompositions.
 Assume from now on that $\Delta>1$ is fixed.
 In this section we present the proof of the criterion to prove weak mixing, that is Lemma \ref{lemma_wkmix_criteria}. In the statements we fix a decomposition into points
 $\{\eta_n\}$, where $\eta_{n}=\eta_{q'_n}$ is a partial $q'_n$-decomposition and $\{q'_n\}\subset \N$ is such that $q'_n\to \infty$. We also fix $\mathcal{C}= \{\mathcal{C}_n\}_{n\in \N}$ with 
 $\mathcal{C}_n=\mathcal{C}_{q_n}$ as in \eqref{eq_dense_collection}, also with $\{q_n\}\subset \N$ such that $q_n\to \infty$.  
 
\begin{lemma}
\label{lemma_wkmix_crit}
Consider $\{\eta_n\}$ and $\mathcal{C}$ as described above. 
 Assume that we have a $\mu$-preserving map $F\in C^{\omega}_{\Delta}(\mathbb{S}^3)$ and a sequence $\{H_n\}\subset \text{Diff}^{\omega}_{\infty}(\mathbb{S}^3, \mu)$ 
 such that $\nu_n= \{H^{-1}_n(I), \; I \in \eta_n\}$ is a decomposition into points. 
 If there exists an increasing sequence $\{m_n\}\subset \N$ such that for every $\varepsilon>0$, for $n$ large enough we have that for all $\Gamma \in \nu_n$ and $B\in \mathcal{C}_{n}$ 
 \begin{equation}
 \label{eq_crit_wmix}
  \abs{\lambda_n(\Gamma \cap F^{-m_n}(B))-\lambda_n(\Gamma)\mu(B)} \leq \varepsilon \lambda_n(\Gamma)\mu(B),
 \end{equation}

 where $\lambda_n(\Gamma):=\lambda^1( H_n(\Gamma)) $, then $F$ is weakly mixing.
\end{lemma}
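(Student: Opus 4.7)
The plan is to verify the characterization of weak mixing in \eqref{eq_weak_mix}: I must show $\mu(A\cap F^{-m_n}(B))\to \mu(A)\mu(B)$ for every $A,B\in\mathcal{B}$, using the sequence $\{m_n\}$ supplied by hypothesis. Fix $\varepsilon>0$. Since $\mathcal{C}$ is a decomposition into points, for $n$ large there is a finite disjoint union $\tilde B_n$ of elements of $\mathcal{C}_n$ with $\mu(B\,\Delta\, \tilde B_n)<\varepsilon$; since $\nu_n$ is a decomposition into points, there is a union $\tilde A_n$ of elements of $\nu_n$ with $\mu(A\,\Delta\, \tilde A_n)<\varepsilon$. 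As $F$ preserves $\mu$,
\begin{equation*}
|\mu(A\cap F^{-m_n}(B))-\mu(\tilde A_n\cap F^{-m_n}(\tilde B_n))|\leq \mu(A\,\Delta\, \tilde A_n)+\mu(B\,\Delta\, \tilde B_n)<2\varepsilon,
\end{equation*}
and $|\mu(A)\mu(B)-\mu(\tilde A_n)\mu(\tilde B_n)|<2\varepsilon$. Hence it suffices to bound $|\mu(\tilde A_n\cap F^{-m_n}(\tilde B_n))-\mu(\tilde A_n)\mu(\tilde B_n)|$ by a constant multiple of $\varepsilon$ for $n$ large.

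For this I would pass to the image under $H_n$, which also preserves $\mu$: writing $\tilde A_n=H_n^{-1}(S)$ with $S$ the corresponding union of curves $I_\alpha\in\eta_n$, we have $\mu(\tilde A_n\cap F^{-m_n}(\tilde B_n))=\mu(S\cap T)$ where $T:=H_n(F^{-m_n}(\tilde B_n))$. Each $I_\alpha$ has the form $I_{c,\xi,i}\times\{\xi\}$ with $I_{c,\xi,i}\subset N_{q'_n,c}$, and the lines $\{N_{q'_n,c}\}_{c\in[0,1)}$ foliate $\T^2$. The shear change of variables $(\theta_1,\theta_2)\mapsto (\theta_1,\, c=\theta_2+(1-(q'_n)^{-1})\theta_1)$ has unit Jacobian, and combined with \eqref{eq_measure} yields the disintegration
\begin{equation*}
\mu(S\cap T)=\int_0^1 dc\int_0^{\pi/2} 4\pi^2 \sin\xi\cos\xi\,d\xi \sum_{i\in\sigma(q'_n,c)}\lambda^1(I_{c,\xi,i}\cap T),
\end{equation*}
the inner sum running over the indices with $I_{c,\xi,i}\times\{\xi\}\subset S$. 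Setting $\Gamma=H_n^{-1}(I_{c,\xi,i}\times\{\xi\})\in\nu_n$ and using that $\tilde B_n$ is a finite disjoint union of sets $B\in\mathcal{C}_n$ with $F^{-m_n}$ a bijection, additivity together with the hypothesis \eqref{eq_crit_wmix} gives $|\lambda^1(I_{c,\xi,i}\cap T)-\lambda^1(I_{c,\xi,i})\mu(\tilde B_n)|\leq \varepsilon\, \lambda^1(I_{c,\xi,i})\mu(\tilde B_n)$, since $\lambda^1(I_{c,\xi,i})=\lambda_n(\Gamma)$. Inserting this into the disintegration and comparing with the same identity for $T=\mathbb{S}^3$ (which returns $\mu(S)=\mu(\tilde A_n)$), I obtain
\begin{equation*}
|\mu(\tilde A_n\cap F^{-m_n}(\tilde B_n))-\mu(\tilde A_n)\mu(\tilde B_n)|\leq \varepsilon\,\mu(\tilde A_n)\mu(\tilde B_n).
\end{equation*}
Combining the three bounds yields $|\mu(A\cap F^{-m_n}(B))-\mu(A)\mu(B)|\leq 5\varepsilon$, so weak mixing follows.

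The principal obstacle is the disintegration step: each $\Gamma\in\nu_n$ has $\mu(\Gamma)=0$, so the three-dimensional measure of $\tilde A_n$ appears only after integrating the one-dimensional lengths $\lambda^1$ over the continuous foliation parameter $c$. Making this rigorous requires care with measurability of the family $\{I_{c,\xi,i}\}$ and with the shear change of variables on $\T^2$; the Hopf coordinate description and the precise form of partial $q'_n$-decompositions from Section \ref{sec_decomp} are exactly what makes this work. Once the disintegration is set up, the remaining steps, namely the approximation of $A$ and $B$ and the combination of the error terms, are a routine additivity computation.
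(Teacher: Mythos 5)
Your proof is correct and follows essentially the same route as the paper's: approximate $A$ and $B$ by finite unions from $\nu_n$ and $\mathcal{C}_n$, pass to the $H_n$-image, perform the unit-Jacobian shear $\theta_2' = \theta_2 + (1-(q'_n)^{-1})\theta_1$ to align the segments $N_{q'_n,c}$ horizontally, disintegrate $\mu$ as $dc\,d\mu_r$ times $\lambda^1$ along the segments, and apply \eqref{eq_crit_wmix} via additivity over the pieces of the approximating union. The only differences are cosmetic (explicit constants, $c$ versus $\theta_2'$ in the change of variables).
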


\begin{proof}
 For  a fixed $\varepsilon'>0$, choose $n$ large enough so that we can approximate any measurable sets $A,B \in \mathcal{B}$ by $B'$ a finite union of elements of $\mathcal{C}_n$ with $\mu(B' \Delta B)< \varepsilon'/8$ and
 \begin{equation*}
   A'= \bigcup_{\substack{c\in [0,1) \\ \xi \in \mathcal{F}_{q'_n}}} \bigcup_{i\in \sigma(\xi,c)}{\Gamma_i},
 \end{equation*}
 where $\sigma(\xi,c)$ is a finite index set such that for all $i\in \sigma(\xi,c)$, $H_n(\Gamma_i)= I_i \in \tilde \eta_{q'_n,c}\times\{\xi\}\subset \tilde{\eta}_{q'_n}\times \mathcal{F}_{q'_n}=\eta_{q_n'}$ and 
 $\mu(A'\Delta A)< \varepsilon'/8$. It follows that
\begin{align}
  \abs{\mu(A\cap F^{-m_n}(B))-\mu(A)\mu(B)} 
  &\leq \mu((A \Delta A')\cap F^{-m_n}(B)) \nonumber \\
  &+\mu(A\cap F^{-m_n}(B \Delta B')) \nonumber\\
   &+\abs{\mu(A' \cap F^{-m_n}(B'))-\mu(A')\mu(B')} \nonumber\\
 &+\mu(A)\mu(B \Delta B') + \mu(B')\mu(A \Delta A') \nonumber\\
   &\leq \abs{\mu(A'\cap F^{-m_n}(B'))-\mu(A')\mu(B')}+\varepsilon'/2. \label{eq_pre_demo}
\end{align}
Therefore due to the assumptions of the lemma, for $\varepsilon= \varepsilon'/(8\pi^4)$ we can consider $n\geq N$ sufficiently large such that
\eqref{eq_crit_wmix} and \eqref{eq_pre_demo}  hold.
It can then be seen, using the volume preserving change of variables 
defined by
 $$
 \left\{
 \begin{array}{l}
   \theta_1=\theta_1', \\ 
   \theta_2 = \theta_2'-(1-{q'}_n^{-1})\theta_1',\\ 
   \xi=\xi',
 \end{array}
 \right.
 $$
 which essentially sends the elements in $\eta_{q'_n}$ to horizontal segments, that
\begin{align*}
  &\abs{\mu(A'\cap F^{-m_n}(B'))-\mu(A')\mu(B')}\\
  &=\abs{\mu(H_n(A')\cap H_n(F^{-m_n}(B')))-\mu(H_n(A'))\mu(B')} \\
  &= \bigg{|}\int_{\xi' \in \mathcal{F}_{q'_n} }\int_{\theta_2'\in[0,1)}
  {\sum_{i\in \sigma(\xi', \theta_2')}(\lambda^1(I_i \cap H_n(F^{-m_n}(B')))-\lambda^1(I_i)\mu(B'))\;d\theta_2' \;  d\mu_r} \bigg{|}\\
  &\leq \int_{\xi' \in \mathcal{F}_{q'_n} }\int_{\theta_2'\in[0,1)}
  {\sum_{i\in \sigma(\xi', \theta_2')}\abs{\lambda_n(\Gamma_i \cap F^{-m_n}(B'))-\lambda_n(\Gamma_i)\mu(B')}\;d\theta_2' \; d\mu_r}\\
  &\leq \int_{\xi' \in \mathcal{F}_{q'_n} }\int_{\theta_2'\in[0,1)}
  {\sum_{i\in \sigma(\xi', \theta_2')}\varepsilon \lambda_n(\Gamma_i)\mu(B')\; d\theta_2' \: d\mu_r}= \varepsilon \mu(A')\mu(B')<\varepsilon'/2,
\end{align*}
which together with \eqref{eq_pre_demo} finishes the proof.
\end{proof}

\begin{remark}
\label{rem_why_sets}
Since the \textit{Anosov-Katok} scheme does not give an explicit expression for the limit map $F$, we need to find a similar criterion to the one in Lemma \ref{lemma_wkmix_crit} which can be verified instead for the approximating maps $\{F_n\}$ and the sets $B_{\pm \varepsilon}$. This allows us to recover, if $\{F_n\}$ converges sufficiently fast, the original assumption in Lemma \ref{lemma_wkmix_crit}. It will suffice that the approximating maps converge sufficiently fast and are approximately mixing in the sense of Definition \ref{def_approx_mix}.
\end{remark}

Consider $\gamma>0$ as given by Lemma \ref{lemma_distancies}.
% The second lemma that we need to prove allows us to substitute $F$ in 
% \eqref{eq_crit_wmix} for a sequence of maps $F_n$ converging to $F$ and still obtain weak mixing.

\begin{lemma}[Criterion for weak mixing]
\label{lemma_wkmix_criteria}
 Consider $\{\eta_n\}$ and $\mathcal{C}$ as described at the beginning of the section. 
 Suppose that $F$ is the limit diffeomorphism of $\{F_n\} \subset \text{Diff}_{\Delta}^{\omega}(\mathbb{S}^3)$
 and that there are sequences $\varepsilon_n \to 0$ and $m_n\to \infty$ such that for all $n\in \N$ the map $F_n$ is $(q_n, q_n', \varepsilon_n, m_n, H_n)$-mixing, where $\{H_n\}\subset \text{Diff}^{\omega}_{\infty}(\mathbb{S}^3, \mu)$ is such that the collections of sets ${\nu_n= \{H^{-1}_n(I), \; I \in \eta_n\} }$ converge to the decomposition into points. 
 If for all $n$ sufficiently large
 \begin{equation}
 \label{eq_fast_mixing}
 \abs{F^{m_n}-F^{m_n}_n}_{\Delta}< \gamma \varepsilon_n q_{n}^{-3/2},
 \end{equation}
then $F$ is weakly mixing. 
\end{lemma}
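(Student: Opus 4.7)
The overall plan is to reduce to Lemma \ref{lemma_wkmix_crit}, whose hypothesis demands, for any $\varepsilon>0$ and all $n$ large, the estimate
\begin{equation*}
|\lambda_n(\Gamma\cap F^{-m_n}(B))-\lambda_n(\Gamma)\mu(B)|\leq \varepsilon\,\lambda_n(\Gamma)\mu(B)
\end{equation*}
for every $\Gamma\in\nu_n$ and $B\in\mathcal{C}_n$. The hypotheses on $\{\eta_n\}$, $\mathcal{C}$, $\{H_n\}$ and the convergence of $\nu_n$ to the decomposition into points are inherited from the present statement, and the limit $F$ is automatically $\mu$-preserving because each $F_n$ is (this is implicit in $F_n$ being $(q_n,q_n',\varepsilon_n,m_n,H_n)$-mixing) and $F_n\to F$ uniformly on $\mathbb{S}^3$. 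So the only real task is to convert the approximate-mixing estimate \eqref{eq_crit_wmix2}, which involves the sets $F_n^{-m_n}(B_{\pm\varepsilon_n})$, into the analogous estimate with $F^{-m_n}(B)$.

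The bridge is the pair of inclusions
\begin{equation*}
F_n^{-m_n}(B_{-\varepsilon_n})\ \subset\ F^{-m_n}(B)\ \subset\ F_n^{-m_n}(B_{\varepsilon_n}),
\end{equation*}
which I would establish as follows. If $x\in F_n^{-m_n}(B_{-\varepsilon_n})$ then $F_n^{m_n}(x)\in B_{-\varepsilon_n}$, and by \eqref{eq_fast_mixing} the point $F^{m_n}(x)$ lies within distance $\gamma\varepsilon_n q_n^{-3/2}$ of $F_n^{m_n}(x)$; since \eqref{eq_dis_bs1} guarantees that every point of $B_{-\varepsilon_n}$ is at distance at least $\gamma\varepsilon_n q_n^{-3/2}$ from $\partial B$, this forces $F^{m_n}(x)\in B$. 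The second inclusion is symmetric, using \eqref{eq_dis_bs2} in place of \eqref{eq_dis_bs1}. Applying $H_n$ and intersecting with $I:=H_n(\Gamma)\in\eta_n$ gives
\begin{equation*}
\lambda^1\!\bigl(I\cap H_n(F_n^{-m_n}(B_{-\varepsilon_n}))\bigr)\leq \lambda^1\!\bigl(I\cap H_n(F^{-m_n}(B))\bigr)\leq \lambda^1\!\bigl(I\cap H_n(F_n^{-m_n}(B_{\varepsilon_n}))\bigr).
\end{equation*}

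To finish, I would bound the two extreme terms by means of \eqref{eq_crit_wmix2} as $\lambda^1(I)\mu(B_{\pm\varepsilon_n})\pm 30\varepsilon_n\lambda^1(I)\mu(B)$, and then absorb the discrepancy $|\mu(B_{\pm\varepsilon_n})-\mu(B)|<\varepsilon_n\mu(B)$ provided by \eqref{sim_diff_2}. The outcome is
\begin{equation*}
\bigl|\lambda^1(I\cap H_n(F^{-m_n}(B)))-\lambda^1(I)\mu(B)\bigr|\leq 31\varepsilon_n\,\lambda^1(I)\mu(B),
\end{equation*}
and since $\lambda_n(\Gamma)=\lambda^1(I)$ and $\varepsilon_n\to 0$, this is stronger than the required \eqref{eq_crit_wmix} for each fixed $\varepsilon$ once $n$ is large. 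Lemma \ref{lemma_wkmix_crit} then delivers weak mixing of $F$. The main obstacle, and really the only non-bookkeeping step, is the sandwich inclusion above: the quantitative hypothesis \eqref{eq_fast_mixing} has been calibrated precisely to match the margin $\gamma\varepsilon_n q_n^{-3/2}$ between $\partial B_{\pm\varepsilon_n}$ and $\partial B$ produced by Lemma \ref{lemma_distancies}, and without this exact coordination of scales the inclusions would fail and the whole argument would break down.
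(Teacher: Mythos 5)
Your proposal is correct and follows essentially the same route as the paper: reduce to Lemma \ref{lemma_wkmix_crit}, establish the sandwich inclusions $F_n^{-m_n}(B_{-\varepsilon_n})\subset F^{-m_n}(B)\subset F_n^{-m_n}(B_{\varepsilon_n})$ from \eqref{eq_fast_mixing} together with \eqref{eq_dis_bs1}--\eqref{eq_dis_bs2}, and then combine \eqref{eq_crit_wmix2} with \eqref{sim_diff_2} to verify \eqref{eq_crit_wmix}. The only difference is cosmetic bookkeeping of constants (your explicit $31\varepsilon_n$ versus the paper's choice $\varepsilon_n\leq\varepsilon/90$), which does not affect the argument.
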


\begin{proof}
 We only need to see that the assumptions of Lemma \ref{lemma_wkmix_criteria} imply the assumptions of Lemma \ref{lemma_wkmix_crit}. Fix $\varepsilon>0$. We can consider $n$ big enough so that $\varepsilon_n\leq  \varepsilon/90$ and the map $F_n$
 is $(q_n, q_n' , \varepsilon_n, m_n, H_n)$-mixing.
 It follows from Lemma \ref{lemma_distancies} that for any $B \in \mathcal{C}_n$, we have
 \begin{equation}
 \label{eq_coses}
  \mu(B \Delta B_{\pm \varepsilon_n})\leq \varepsilon_n \mu(B).
 \end{equation} 
 According to \eqref{eq_dis_bs1}, \eqref{eq_dis_bs2} and \eqref{eq_fast_mixing} we obtain, for any $\Gamma \in \nu_n$,
 \begin{equation*}
 \Gamma \cap F_n^{-m_n}(B_{-\varepsilon_n})\subset \Gamma \cap  F^{-m_n}(B) \subset \Gamma \cap F_n^{-m_n}(B_{\varepsilon_n}).
 \end{equation*}
 Hence, since \[\lambda_n(\Gamma \cap  F_n^{-m_n}(B_{\pm \varepsilon_n}))=\lambda^1(I \cap H_n(F_n^{-m_n}(B_{\pm \varepsilon_n})))\] and \[\lambda_n(\Gamma)= \lambda^1(H_n(\Gamma))= \lambda^1(I),\]  it follows from \eqref{eq_crit_wmix2} that 
 \begin{equation*} 
  \lambda_n(\Gamma)\mu(B_{-\varepsilon_n})-\varepsilon/3 \lambda_n(\Gamma)\mu(B)\leq \lambda_n(\Gamma \cap F^{-m_n}(B)) \leq (1+\varepsilon/3) \lambda_n(\Gamma)\mu(B_{\varepsilon_n}),
 \end{equation*}
 which due to \eqref{eq_coses} implies
 \begin{equation*}
  \abs{\lambda_n(\Gamma \cap F^{-m_n}(B))-\lambda_n(\Gamma )\mu(B)} \leq \varepsilon \lambda_n(\Gamma)\mu(B).
 \end{equation*}
 This finishes the proof.
\end{proof}

% There is one last lemma regarding decompositions into points that 
% we will need in the proof of Theorem \ref{thm_main}. Assume that for a sequence $\{q_n'\}\subset \N$ we have 
% a sequence $\{H_n\}\subset Ent(\mathbb{S}^3, \mu)$, where
% $H_n$ is independent of $q'_k,\; k\geq n$.

\section{Construction of the map}
\label{sec_2}
The \textit{Anosov-Katok} scheme that we use can be understood as a modification of the scheme used in \cite{FK}. 
% Recall that we want to use an iterative scheme with maps of the form \eqref{eq_intro}.
% In order to prove weak mixing, we will use a criterion (Lemma  \ref{lemma_wkmix_criteria}) ensuring that if at each step the map $F_n$ is approximately mixing (see Definition \ref{def_approx_mix}) and the sequence $\{F_n\}$  converges sufficiently fast, then the limit map is weakly mixing. In order to prove that this holds at each step, we use the original construction in \cite{FK}.
The main result in \cite{FK} (see Theorem $1$) states that for any $\beta \in \T$ there exists a uniquely ergodic and volume preserving diffeomorphism $f\in \text{Diff}^{\omega}_{\Delta}(\mathbb{S}^3)$, arbitrarily close to $\varphi_{\beta}$ in the analytic topology, which is obtained as the limit of a sequence $\{f_l\}$ with $f_{-1}=\varphi_{\beta_0}$ and for $l\geq 0$
\begin{equation*}
 f_l=H_l^{-1} \circ \varphi_{\beta_{l+1}} \circ H_l,
\end{equation*}
with $H_l \in \text{Diff}^{\omega}_{\infty}(\mathbb{S}^3, \mu)$ defined inductively as $H_{-1}=id$,
\begin{equation*}
H_l= h_l \circ H_{l-1}, \; \quad h_l \circ \varphi_{\beta_{l}}= \varphi_{\beta_l} \circ h_l
\end{equation*}
for appropriate sequences $\{\beta_l\}_{l\geq 0} \subset \Q$ and $\{h_l\}_{l\geq 0} \subset \text{Diff}^{\omega}_{\infty}(\mathbb{S}^3, \mu)$. We can then reformulate their conclusion in the form of the following proposition.
\begin{proposition}
\label{prop_unique_erg}
 For any $\beta \in \T$, $\varepsilon>0$ and $\Delta>1$ there exist
 sequences $\{\beta_l\}_{l\geq 0}\subset \Q $ and $\{h_l\}_{l\geq 0}\subset \text{Diff}^{\omega}_{\infty}(\mathbb{S}^{3}, \mu)$ such that 
 \begin{itemize}
  \item[i)] $h_l^{-1} \circ \varphi_{\beta_l} \circ h_l= \varphi_{\beta_l}$,  for any $l \geq 0$.
  \item[ii)] For $f_l:= h_0^{-1}\circ \ldots \circ h_{l}^{-1}\circ \varphi_{\beta_{l+1}} \circ h_l \circ \ldots \circ h_0$, the map $f:=\lim_{l\to \infty} f_l \in \text{Diff}^{\omega}_{\Delta}(\mathbb{S}^3)$ is a uniquely ergodic volume preserving map and $\abs{f-\varphi_{\beta}}_{\Delta}<\varepsilon$.
 \end{itemize}
\end{proposition}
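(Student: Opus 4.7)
The plan is to build the sequences $\{\beta_l\}$ and $\{h_l\}$ inductively, following the AbC scheme of \cite{FK}; the proposition is essentially a bookkeeping rewriting of the main construction there. Start by choosing $\beta_0=p_0/q_0\in\Q$ so close to $\beta$ that $|\varphi_{\beta_0}-\varphi_{\beta}|_{\Delta}<\varepsilon/2$, and set $H_{-1}=\mathrm{id}$. At step $l\geq 0$, assuming $\beta_l=p_l/q_l$ and $H_{l-1}$ have been chosen, I would construct $h_l\in\mathrm{Diff}^{\omega}_{\infty}(\mathbb{S}^3,\mu)$ that commutes with $\varphi_{\beta_l}$; the commutation forces $h_l$ to be $(1/q_l)$-periodic in the Hopf angles $(\theta_1,\theta_2)$, so it is determined by its restriction to a fundamental domain. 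The explicit form, as in \cite{FK}, is a composition of a radial shear in $\xi$ and a rational-slope twist in $(\theta_1,\theta_2)$, designed so that the pushforward under $H_l=h_l\circ H_{l-1}$ of each orbit of $\varphi_{p/q_{l+1}}$ (for the denominator $q_{l+1}$ to be picked next) spreads to a set that is $2^{-l}$-dense in $\mathbb{S}^3$ in the Hausdorff sense.

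Next I would choose $\beta_{l+1}\in\Q$ of the form $\beta_l+1/(N_l q_l)$ with $N_l\in\N$ very large, so $q_{l+1}\approx N_l q_l$. The key elementary estimate is
\[
|f_l-f_{l-1}|_{\Delta}\;\leq\;\|DH_l\|_{B_{\Delta}}^{2}\,|\beta_{l+1}-\beta_l|,
\]
which uses $h_l\circ\varphi_{\beta_l}=\varphi_{\beta_l}\circ h_l$ to replace $\beta_{l+1}$ by $\beta_l$ inside the conjugation. Having already fixed $h_l$ and hence $\|DH_l\|_{B_{\Delta}}$, I can select $N_l$ so large that $|f_l-f_{l-1}|_{\Delta}<\varepsilon\cdot 2^{-l-2}$ together with an analogous bound for the inverses. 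Telescoping gives $\{f_l\}$ Cauchy in $\mathrm{Diff}^{\omega}_{\Delta}(\mathbb{S}^3)$, so $f=\lim f_l$ lies in this space and satisfies $|f-\varphi_{\beta}|_{\Delta}<\varepsilon$. Volume preservation is preserved in the limit because each $f_l$ preserves $\mu$.

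For unique ergodicity, I would argue that any invariant Borel probability measure $\nu$ for $f$ must equal $\mu$. By density, it suffices to test $\int\phi\,d\nu=\int\phi\,d\mu$ for continuous $\phi$. Because $f_l$ is conjugate to $\varphi_{\beta_{l+1}}$, every $f_l$-orbit of length $q_{l+1}$ is supported on $H_l^{-1}(\mathcal{O})$ for an orbit $\mathcal{O}$ of $\varphi_{\beta_{l+1}}$ on $\mathbb{S}^3$, and the construction of $h_l$ was designed precisely so that $H_l^{-1}(\mathcal{O})$ is $2^{-l}$-dense and $\mu$-equidistributed on $\mathbb{S}^3$ to within $2^{-l}$. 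The convergence \eqref{eq_fast_mixing}-type estimate $|f^{q_{l+1}}-f_l^{q_{l+1}}|_{\Delta}$ made small by the choice of $N_l$ then transfers equidistribution from $f_l$ to $f$, yielding $\nu=\mu$.

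The hard part is the balance in choosing $h_l$: to accomplish the $2^{-l}$-equidistribution on the real sphere one needs $h_l$ to oscillate at scale $1/q_l$, which forces its holomorphic extension to $B_{\Delta}$ to have a derivative norm $\|DH_l\|_{B_{\Delta}}$ that blows up (exponentially in $q_l$, due to the factor $\Delta^{q_l}$ coming from the trigonometric building blocks). Controlling this blow-up while keeping the whole scheme analytic is exactly the technical content of \cite{FK}; it is handled there by using explicit rational-slope maps whose complex norms admit sharp estimates, allowing $N_l$ (and thus the arithmetic complexity of $\beta_{l+1}$) to be chosen just large enough to absorb $\|DH_l\|_{B_{\Delta}}^{2}$ at every step. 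Once these explicit bounds are in hand, the induction closes and the proposition follows by reading off the sequences $\{\beta_l\}$ and $\{h_l\}$ produced by the construction.
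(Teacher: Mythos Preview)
Your sketch is a reasonable outline of the AbC construction from \cite{FK}, but there is in fact nothing to compare it against: the paper does not prove Proposition~\ref{prop_unique_erg}. It is introduced with the sentence ``We can then reformulate their conclusion in the form of the following proposition'' and is treated throughout as a black-box restatement of \cite[Theorem~1]{FK}. So the ``paper's own proof'' is simply the citation.

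Two minor technical remarks on your sketch, neither of which is a real gap. First, the convergence estimate should have the derivative evaluated on a ball $B_{R_l}$ large enough to contain $H_l(B_\Delta)$, not on $B_\Delta$ itself; compare the paper's later use of exactly this device in \eqref{eq_growth}. With the paper's convention \eqref{eq_norm_operador} (which already packages $DH_l$ and $DH_l^{-1}$ together) a single factor of the norm suffices rather than a square. Second, transferring equidistribution from $f_l$ to $f$ requires smallness of $|f^j-f_l^j|_\Delta$ for all $0<j\le q_{l+1}$, not just the single iterate $j=q_{l+1}$; this is again absorbed into the choice of $N_l$ and is implicit in what you wrote. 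With these adjustments your outline matches the argument of \cite{FK} that the paper is invoking.
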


\vspace{0.2cm}
 
% The finite sequence $\{h_l\}$ is defined by using an explicit family of transitive actions on $\mathbb{S}^{2n-1}$ (see \cite{FK}).
\begin{remark}
The result in Proposition \ref{prop_unique_erg} holds for odd dimensional spheres of dimension greater or equal to three.
\end{remark}

Let us introduce the specific form of the map that we will use to modify the construction in Proposition \ref{prop_unique_erg}.
For any $q\in \N$ and $s\in \T$, we define first 
$\zeta^s_{q}: \mathbb{S}^3 \rightarrow \mathbb{S}^3$ by
\begin{equation}
 \label{eq_funcio}
\zeta^s_{q}(z):=(e^{2\pi i s}z_1, e^{2\pi i (1+q^{-1})s}z_2),
\end{equation}

and $\chi_q: \mathbb{S}^3 \rightarrow \R$ by
\begin{align}
\chi_{q}(z)&:=\text{Re}(z_1^{q(q +1)})\text{Re}(z_2^{q^2})+
\text{Im}(z_1^{q(q +1)})\text{Im}(z_2^{q^2}) \label{eq_nu}\\
&=r_1^{q(q+1)} r_2^{q^2} \cos(2\pi q^2[\theta_1(1+ q^{-1})-\theta_2]) \nonumber.
\end{align}

Notice that $\chi_q$ is entire in $(x_1, y_1, x_2, y_2)$. 
This is so because the functions in \eqref{eq_nu} are polynomials in
the variables $x_1$, $y_1$, $x_2$, $y_2$, (where $z_1=x_1+iy_1$ and $z_2=x_2+iy_2$), and thus they are real entire.
% (we can extend them to holomorphic functions in $\C^4$)
The maps that we use to modify the construction of Proposition
\ref{prop_unique_erg} are of the form
\begin{equation}
\label{eq_g}
g_{q,A}(z)=\zeta_{q}^{A\chi_{q}(z)}(z), \;  A>0.
\end{equation}

Since 
$\chi_q$ is real entire, it follows from \eqref{eq_funcio} that 
$g_{q,A}$ is also entire. 
 The map $g_{q,A}$ is expressed in Hopf coordinates as
 \begin{align}
  & g_{q, A}(\theta_1, \theta_2, \xi)= (\theta_1+ A r_1^{q(q+1)} r_2^{q^2} \cos(2\pi q^2[\theta_1(1+ q^{-1})-\theta_2]), \nonumber \\
  & \quad \quad \theta_2+ (1+q^{-1}) A r_1^{q(q+1)} r_2^{q^2} \cos( 2\pi q^2[\theta_1(1+ q^{-1})-\theta_2]), \xi). \label{eq_g_hopf}
 \end{align}
 
 It follows from a computation that $\det(\text{D}g_{q, A})=1$ and since $g_{q,A}$ is the identity on the third coordinate, it preserves $\mu$.
 Notice also that $g_{q,A}^{-1}=g_{q, -A}$ and thus $g_{q,A} \in \text{Diff}^{\omega}_{\infty}(\mathbb{S}^3, \mu)$, and also that
 $g_{q,A}\circ \varphi_{p/q} = \varphi_{p/q} \circ g_{q,A}$ (this commuting property is essential, as we will see later, for the conjugacies in AbC constructions). 
 Let us define, for any $\tilde{\alpha} \in \T, q\in \N$ and $A>0$, the map
\begin{equation}
\label{eq_phi}
 \Phi_{q, A, \tilde \alpha}= 
 g_{q,A}^{-1} \circ \varphi_{\tilde \alpha} \circ g_{q,A}.
\end{equation}
 In the next section we show how $\Phi_{q, A, \tilde \alpha}$ plays a key role in the modification of the scheme in \cite{FK}, by explaining the idea for one step of our construction.  
 
 \vspace{0.2cm}

\subsection{Plan for one step of the construction}
\label{sec_idea}
Assume that we are given a volume preserving map $F= V^{-1} \circ \varphi_{\alpha} \circ V$, $V\in \text{Diff}^{\omega}_{\infty}(\mathbb{S}^3, \mu)$, $\alpha=p/q \in \Q$. 
Let us explain how to use Proposition \ref{prop_unique_erg} to obtain an approximately mixing map lying arbitrarily close to $F$.
\begin{itemize}
 \item[1)] For an arbitrary $\varepsilon>0$, since the limit map $f$ in Proposition \ref{prop_unique_erg} is uniquely ergodic, there exists $l$ big enough such that 
$f_l=H_l^{-1} \circ \varphi_{\beta_{l+1}} \circ H_l$ with $\beta_{l+1}=p_{l+1}/q_{l+1}$ and $H_l= h_{l} \circ \ldots \circ h_0$ is ``approximately ergodic'' in the following sense: for a sufficiently large number of iterations the orbits of the map $f_{l}$ are uniformly distributed among the elements of $\mathcal{C}_q$ with an ``$\varepsilon$-precision'' (recall Definition \ref{def_approx_ergodic} for a formal description). Since the map $V$ is volume preserving, we can also assume that this is the case for the conjugate map
$f'_l:= V^{-1}\circ H_l^{-1} \circ \varphi_{\beta_{l+1}} \circ H_l \circ V$, which can be assumed to be arbitrarily close to $V^{-1}\circ \varphi_{\beta_0}\circ V$.
By choosing $\beta_{0}$ to be very close to our original $\alpha$, the distance between the initial $F$ and $f'_l$ can then be made arbitrarily small. 
Notice next  that for any $x\in \mathbb{S}^3$, $y:= V^{-1}\circ H^{-1}_l(x)$ satisfies that the set
$\mathcal{O}_y=\{\varphi^n_{\beta_{l+1}}(H_l \circ V (y)) \}^{\infty}_{n=0}$ is $1/q_{l+1}$-dense in the set
\[D_{\xi,c}:=\{\theta_2=\theta_1+c, \theta_1\in \T \} \times \{\xi\}\] 
for some $0\leq c<1$, $\xi \in [0, \pi/2]$,
and it follows from the approximate ergodicity of $f'_l$ that the map $V^{-1} \circ H_l^{-1}$ must distribute uniformly the points of $\mathcal{O}_y$ among the sets in $\mathcal{C}_q$ with an ``$\varepsilon$-precision''. 

\item[2)] In order to use the previous step to obtain approximate weak mixing, we introduce the map 
$g_{q_{l+1}, A}$ defined in \eqref{eq_g} as we describe next. We prove in Proposition \ref{lemma_stretching} that it is possible to find constants $A>0$, $m\in \N$, $\tilde{\alpha}\in \Q$ and a partial $q_{l+1}$-decomposition $\eta_{q_{l+1}}$ such that the $m$th iterate of the map $ \Phi_{q_{l+1}, A, \tilde \alpha}= 
 g_{q_{l+1},A}^{-1} \circ \varphi_{\tilde \alpha} \circ g_{q_{l+1},A}$
 stretches all $I \in \eta_{q_{l+1}}$ uniformly in measure (in a sense that will be made precise later in Definition \ref{def_stretch}) onto a curve lying close to $D_{\xi, c}$ for some $c\in [0,1)$,  $\xi \in [0, \pi/2]$. This means that for any  ${\Gamma \in V^{-1} \circ H_l^{-1}(\eta_{q_{l+1}})}$
\begin{equation}
\label{eq_ini_stre}
  \Phi_{q_{l+1}, A, \tilde \alpha}^m  \circ H_{l}\circ V (\Gamma)\sim D_{\xi, c}
\end{equation}
for some $0\leq c <1$, $\xi \in [0, \pi/2]$.
 As a consequence, in the same way as the points of the set $\mathcal{O}_{y} \subset D_{\xi, c}$ were distributed uniformly among the sets in $\mathcal{C}_q$ when applying $V^{-1} \circ H^{-1}_l$, now the $m$th iteration of the map
$$
 \bar{F}:=V^{-1}\circ H_l^{-1} \circ  \Phi_{q_{l+1}, A, \tilde \alpha} \circ H_l \circ V
$$
will distribute the measure of every segment $\Gamma \in V^{-1}\circ H_l^{-1}(\eta_{q_{l+1}})$ among the sets in $\mathcal{C}_q$ proportionally to their measure again with an ``$\varepsilon$-precision'', or more formally, it will be $(q, q_{l+1}, \varepsilon, m, H_l \circ V)$-mixing. 
 At the same time, an appropriate choice of the constants can be made so that the distance between $\bar F$ and $F$ remains small.
\end{itemize}
By iterating the two steps above we can construct a sequence of maps $\{F_n\}$ with each $F_n$ being approximately mixing and converging sufficiently fast so that the limit is weakly mixing by Lemma \ref{lemma_wkmix_criteria}.

\section{Proof of Theorem \ref{thm_main}}
\label{sec_proof}
In this section we finish the proof of Theorem \ref{thm_main}. Before doing this, we will need a couple of preliminary results.

\begin{lemma}
 \label{lemma_canvi_decomp}
 Let $\varepsilon>0$, $q\in \N$ and $K \in \text{Diff}^{\omega}_{\infty}(\mathbb{S}^3, \mu)$ be fixed. For any integer $q'>q$ sufficiently large, for any partial $q'$-decomposition $\eta_{q'}$ ,  
 $\nu_{q'} := \{ K^{-1}(I) \; |\; I \in \eta_{q'}\}$ satisfies that
 for any $B \in \mathcal{C}_q$ there exists 
 $\{\Gamma_i\}_{i\in \sigma} \subset \nu_{q'}$ with 
 $\mu(B \Delta \cup_{i\in \sigma}{\Gamma_i})< \varepsilon$.
\end{lemma}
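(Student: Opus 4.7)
The plan is to take $\sigma$ to be the collection of leaves of $\nu_{q'}$ lying entirely inside $B$, and to show the leftover measure $\mu(B\setminus\bigcup_{i\in\sigma}\Gamma_i)$ is small by separating it into the part of $B$ missed by every leaf of $\nu_{q'}$ and the part of $B$ reached by leaves that straddle $\partial B$. The central preliminary input will be a uniform diameter bound: since $K\in\text{Diff}^{\omega}_{\infty}(\mathbb{S}^{3},\mu)$ and $\mathbb{S}^{3}$ is compact, $C:=\|\mathrm{D}K^{-1}\|_{\mathbb{S}^{3}}$ is finite, and combined with \eqref{eq_second_cond_dec} this gives $\mathrm{diam}(K^{-1}(I))\leq C{q'}^{-3}$ for every leaf of $\nu_{q'}$.

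Setting $\sigma:=\{i:\Gamma_i\subset B\}$, I will observe that $B\Delta\bigcup_{i\in\sigma}\Gamma_i=B\setminus\bigcup_{i\in\sigma}\Gamma_i\subset E_1\cup E_2$, where
\begin{equation*}
E_1:=B\setminus\bigcup_{I\in\eta_{q'}}K^{-1}(I),\qquad E_2:=B\cap\bigcup\{\Gamma_j\in\nu_{q'}\;:\;\Gamma_j\cap\partial B\neq\emptyset\}.
\end{equation*}
For $E_2$, connectedness of each $\Gamma_j$ forces any $x\in E_2$ to lie within Euclidean distance $C{q'}^{-3}$ of $\partial B$; since $B$ is a Hopf box whose $2$-dimensional boundary has area bounded by a constant depending only on $q$, this tubular neighborhood has $\mu$-measure $O({q'}^{-3})$. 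For $E_1$, volume preservation of $K$ reduces the bound to $\mu(\mathbb{S}^{3}\setminus\bigcup\eta_{q'})$. Applying the volume-preserving change of coordinates $(\theta_1,\theta_2,\xi)\mapsto(\theta_1,\theta_2+(1-{q'}^{-1})\theta_1,\xi)$ used in the proof of Lemma \ref{lemma_wkmix_crit} to straighten the lines $N_{q',c}$ into horizontal segments, Fubini together with \eqref{eq_first_cond_dec} will bound the uncovered mass on each straightened line by $3/\sqrt{q'}$, while \eqref{eq_measure} yields $\mu(\mathbb{S}^{3}\setminus(\T^{2}\times\mathcal{F}_{q'}))=O({q'}^{-1})$ by direct calculation; altogether $\mu(E_1)=O({q'}^{-1/2})$. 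Summing the two estimates gives $\mu(B\Delta\bigcup_{i\in\sigma}\Gamma_i)=O({q'}^{-1/2})$, which is less than $\varepsilon$ for all $q'$ sufficiently large.

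The main technical obstacle will be the estimate on $E_1$: converting the one-dimensional coverage guarantee \eqref{eq_first_cond_dec}, which lives on a single diagonal $N_{q',c}$, into a genuine three-dimensional $\mu$-bound. The straightening change of variables makes this routine via Fubini, but care is needed to treat the degenerate $\xi$-strips with $\sin\xi\notin[{q'}^{-1},1-{q'}^{-1}]$ separately, since the Hopf weight $\sin\xi\cos\xi$ in \eqref{eq_measure} makes the upper strip contribute $\mu_r$-mass of order ${q'}^{-1}$ rather than being negligible.
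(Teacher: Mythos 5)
Your proposal is sound: selecting the leaves of $\nu_{q'}$ contained in $B$, splitting the error into the globally uncovered set and the boundary-straddling leaves, controlling the latter through the Lipschitz bound $\|\mathrm{D}K^{-1}\|$ together with \eqref{eq_second_cond_dec}, and converting \eqref{eq_first_cond_dec} into a three-dimensional bound by the shear that straightens the lines $N_{q',c}$ (with the strips $\sin\xi\notin[{q'}^{-1},1-{q'}^{-1}]$ handled separately) all works, and the constants are uniform over the finitely many $B\in\mathcal{C}_q$ and over all partial $q'$-decompositions. The paper, however, argues in the opposite direction and much more briefly: it considers the finitely many sets $K(B)$, $B\in\mathcal{C}_q$, invokes the previously stated fact that partial $q'$-decompositions converge to the decomposition into points to approximate each $K(B)$ by a union of leaves $\{I_i\}_{i\in\sigma}\subset\eta_{q'}$, and then pulls back by $K$, using only that $K$ preserves $\mu$, so that $\mu(B\,\Delta\cup_{i\in\sigma}K^{-1}(I_i))=\mu(K(B)\,\Delta\cup_{i\in\sigma}I_i)<\varepsilon$. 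The trade-off is clear: the paper's route needs no derivative bound on $K^{-1}$ and no diameter control of the distorted leaves, since all geometric work is done upstream of $K$, but it leans on the convergence assertion $\eta_{q_n}\to\varepsilon$, which is stated after Definition \ref{def_partial_q_decomposition} without proof; your argument essentially supplies a quantitative proof of that assertion (composed with $K^{-1}$), yielding an explicit rate $O({q'}^{-1/2}+{q'}^{-3})$ that is uniform in the choice of partial $q'$-decomposition, which is precisely what the quantifier ``for any partial $q'$-decomposition'' in the statement requires. The only caveat, shared equally by the paper's version, is the measurability of uncountable unions of leaves, which both treatments pass over silently.
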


\begin{proof}
Consider the collection of sets $K(B), \; B\in \mathcal{C}_q$. Since $\{\eta_{q'}\}$ converges to the decomposition into points as $q'\to \infty$, there exists $q'$ sufficiently large such that for any 
$B \in \mathcal{C}_q$ there exists a collection $\{I_i\}_{i\in \sigma} \subset \eta_{q'}$ with
$\mu(K(B)\Delta \cup_{i\in \sigma} I_i)<\varepsilon$. Since $K$ is a volume preserving diffeomorphism, this finishes the proof.
\end{proof}
The following proposition is the core of the iterative scheme needed for the proof of Theorem \ref{thm_main}. 
\begin{proposition}[Inductive step]
 \label{prop_dunaalaltra}
 Given $V\in \text{Diff}^{\omega}_{\infty}(\mathbb{S}^3, \mu)$, $\alpha= p/q \in \Q$,
 $\varepsilon>0$ and $m\in \N$, there exists $H\in \text{Diff}^{\omega}_{\infty}(\mathbb{S}^3, \mu)$ such that for any $L>0$ there are
   $m<\tilde m \in \N$,  $A>0$, $\tilde{\alpha}=\tilde p/\tilde q \in \Q$, $q<q'<\tilde{q}$ with
   \begin{equation}
   \label{eq_fer_dist}
    q'>q+L
   \end{equation}
   and a partial $q'$-decomposition $\eta_{q'}$ 
  such that
  $$F:= V^{-1}\circ {H}^{-1} \circ \Phi_{q', A, \tilde{\alpha}}  \circ {H} \circ V$$
  with $\Phi_{q', A, \tilde{\alpha}}$ defined by \eqref{eq_phi} satisfies: 
 \begin{itemize}
  \item[i)] $\abs{F^j-V^{-1}\circ \varphi^j_{\alpha}\circ V}_{\Delta}<\varepsilon, \; \; 0 < j \leq m$,
%   \item[iii)] $\lambda^2(\Phi_{q', A, \tilde{\alpha}}^{\tilde m}(I))=1$ for all $I\in \eta_{q'}$,
  \item[ii)] $F$ is $(q, q', \varepsilon, \tilde m, H\circ V)$-mixing.
 \end{itemize}
\end{proposition}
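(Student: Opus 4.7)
The plan is to follow the two-step strategy sketched in Section \ref{sec_idea}. First, I would apply Proposition \ref{prop_unique_erg} with base rotation $\varphi_\alpha$ and tolerance $\varepsilon'$ much smaller than $\varepsilon$ (chosen to compensate for the amplification of error by $V, V^{-1}$ and $m$ iterates) to obtain sequences $\{\beta_l = p_l/q_l\} \subset \Q$ and $\{h_l\} \subset \text{Diff}^{\omega}_{\infty}(\mathbb{S}^3, \mu)$ whose associated maps $f_l = H_l^{-1} \circ \varphi_{\beta_{l+1}} \circ H_l$ converge to a uniquely ergodic $f$ with $\abs{f - \varphi_\alpha}_\Delta < \varepsilon'$, where $H_l := h_l \circ \cdots \circ h_0$. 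Since $f$ is uniquely ergodic and $f_l \to f$ in $\text{Diff}^{\omega}_{\Delta}(\mathbb{S}^3)$, an index $l_0$ can be chosen large enough that both (a) $\abs{f_{l_0}^j - \varphi_\alpha^j}_\Delta$ is as small as desired for $1 \leq j \leq m$, and (b) $f_{l_0}$ is $(q, \varepsilon/90, N_0)$-ergodic in the sense of Definition \ref{def_approx_ergodic} for some $N_0$. Set $H := H_{l_0}$; this choice depends on $V, \alpha, \varepsilon, m$ but not on $L$.

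Given $L > 0$, the remaining parameters are produced by the stretching lemma (Lemma \ref{lemma_stretching}). Choose $q' > q + L$ together with $A > 0$, $\tilde m > m$, $\tilde\alpha = \tilde p/\tilde q \in \Q$ with $q < q' < \tilde q$, and a partial $q'$-decomposition $\eta_{q'}$, in such a way that $\Phi_{q', A, \tilde\alpha}^{\tilde m}$ stretches each $I \in \eta_{q'}$ uniformly in measure onto a curve that is $1/q'$-dense in some Hopf circle $D_{\xi, c}$. One can additionally require that $\tilde\alpha$ lies arbitrarily close to $\beta_{l_0 + 1}$ (by taking $\tilde q$ sufficiently large) and that $A$ is small enough (necessarily exponentially small in $q'$ to compensate for the size of $\chi_{q'}$ on the complex ball $B_\Delta$) that $\abs{g_{q', A} - \textrm{id}}_\Delta$ is negligible.

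To verify property (i), use the identity $\Phi_{q', A, \tilde\alpha}^j = g_{q', A}^{-1} \circ \varphi_{\tilde\alpha}^j \circ g_{q', A}$ and a triangle inequality to split $\abs{H^{-1} \Phi_{q', A, \tilde\alpha}^j H - \varphi_\alpha^j}_\Delta$ into a piece controlled by $g_{q', A}$ being close to the identity, a piece from $\tilde\alpha$ close to $\beta_{l_0 + 1}$ (giving $\varphi_{\tilde\alpha}^j \approx \varphi_{\beta_{l_0 + 1}}^j$), and the piece $\abs{f_{l_0}^j - \varphi_\alpha^j}_\Delta$ controlled by (a); conjugating by $V^{\pm 1}$ then gives (i). For property (ii), the relation $(H \circ V) \circ F^{-\tilde m} = \Phi_{q', A, \tilde\alpha}^{-\tilde m} \circ (H \circ V)$ rewrites the quantity in Definition \ref{def_approx_mix} as
\begin{equation*}
\lambda^1\bigl(I \cap \Phi_{q', A, \tilde\alpha}^{-\tilde m}((H \circ V)(B_{\pm\varepsilon}))\bigr).
\end{equation*}
Applying $\Phi_{q', A, \tilde\alpha}^{\tilde m}$ and using its uniform-in-measure stretching on $I$, this reduces to estimating the fraction of a $1/q'$-dense sampling of $D_{\xi, c}$ that falls inside $(H \circ V)(B_{\pm\varepsilon})$. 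Approximate ergodicity (b) of $f_{l_0}$, combined with the closeness of $\tilde\alpha$ to $\beta_{l_0+1}$ and the margin estimates of Lemma \ref{lemma_distancies}, yields that this fraction equals $\mu(B_{\pm\varepsilon}) \pm 30\varepsilon\mu(B)$, which is the approximate mixing condition.

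The main obstacle is Lemma \ref{lemma_stretching} itself: it must produce uniform-in-measure (rather than merely topological) stretching of the partial $q'$-decomposition, and this quantitative property is what makes (ii) hold with the correct dependence on $\varepsilon$. Granting that, the essential balancing act is parameter coordination: $A$ must be small enough that $\Phi_{q', A, \tilde\alpha}^j$ remains close to $\varphi_{\tilde\alpha}^j$ on $B_\Delta$ for $j \leq m$ (needed for (i)), yet the accumulated action $\Phi_{q', A, \tilde\alpha}^{\tilde m}$ with $\tilde m \gg m$ must still exhibit the necessary stretching (needed for (ii)); and $\tilde\alpha$ must approximate $\beta_{l_0 + 1}$ strongly enough to keep the orbit structure inherited from step~1 almost intact over $\tilde m$ iterates.
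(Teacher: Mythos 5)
There is a genuine gap, and it sits exactly at the point you call ``the essential balancing act''. You propose to take $A$ small (even exponentially small in $q'$) so that $g_{q',A}$ is near the identity on $B_\Delta$, and to recover the stretching from the accumulated action of $\tilde m$ iterates. This cannot work: the uniform stretching in Lemma \ref{lemma_stretching} is obtained in essentially one burst, by choosing $\tilde m$ so that $q'\tilde m\tilde\alpha$ is close to $1/2$ modulo $1$ (so the two cosine terms add rather than cancel) and then taking $A$ \emph{large} with respect to $q'$ and $\rho$ (see \eqref{eq_mida_interval} and \eqref{eq_implica_stretching}, which need $A\geq C(q')/\rho$). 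With $A$ small the derivative bounds that give \eqref{eq_stretch_phi} fail, and your verification of (ii) collapses. The actual resolution in the paper is not a compromise on the size of $A$ but an exact algebraic fact: $q'$ is taken to be the \emph{denominator of the intermediate rotation} $\alpha'=p'/q'$ produced by the approximate-ergodicity step (Lemma \ref{lemma_ergo}), so that $g_{q',A}$ commutes exactly with $\varphi_{\alpha'}$. Then $H^{-1}g_{q',A}^{-1}\varphi^j_{\alpha'}g_{q',A}H = H^{-1}\varphi^j_{\alpha'}H$ for every $j$, and property (i) only costs the error $\abs{\tilde\alpha-\alpha'}$, which is chosen \emph{after} $A$ (by taking $\tilde q$ huge), so $A$ is free to be as large as the stretching demands. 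In your setup $q'$ is chosen after fixing $H=H_{l_0}$ with no relation to the denominator $q_{l_0+1}$ of $\beta_{l_0+1}$; then $g_{q',A}$ does not commute with $\varphi_{\beta_{l_0+1}}$ (commutation of $g_{q,A}$ with $\varphi_{p'/q'}$ forces $q'\mid q$), and you have no mechanism left to prove (i) once $A$ must be large.

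The same decoupling of $q'$ from the rotation breaks your verification of (ii). Approximate ergodicity (Definition \ref{def_approx_ergodic}) controls Birkhoff sums along orbits of $f_{l_0}$, i.e.\ along the specific $q_{l_0+1}$-point orbits of $\varphi_{\beta_{l_0+1}}$ pulled back by $V^{-1}\circ H^{-1}$; it says nothing about an arbitrary $1/q'$-dense sampling of a Hopf circle. The paper bridges this by showing (Lemma \ref{lemma_cosa}) that $\Phi^{\tilde m}_{q',A,\tilde\alpha}(I)$ stays within $10/\sqrt{q'}$ of a shifted diagonal $D_{\xi,\tilde c}$ and then partitioning it into exactly $q'$ arcs whose $\theta_2$-projections match those of the orbit $\{\varphi^k_{\alpha'}(z_0)\}_{k=0}^{q'-1}$ (the partition $\mathcal{J}$ in \eqref{eq_partition}); only because $\alpha'$ has denominator $q'$ does the count of arcs landing in $B_{\pm\varepsilon}$ reduce, via Lemma \ref{lemma_intervals} and the margins of Lemma \ref{lemma_distancies} together with the uniform continuity of $V^{-1}\circ H^{-1}$ (which forces $q'>Q(q,\varepsilon,H,V)$), to a length-$q'$ Birkhoff sum controlled by the $(q,2\varepsilon,q')$-ergodicity of $f$. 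So the missing idea is the identification of the decomposition scale, the commutation parameter of $g_{q',A}$, and the denominator of the intermediate rotation as one and the same $q'$; without it both (i) and (ii) fail as argued.
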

Let postpone the proof of Proposition \ref{prop_dunaalaltra} to Section \ref{sec_prop} and focus now instead on the proof of Theorem \ref{thm_main}.
\begin{proofof}{Theorem \ref{thm_main}}
Recall that, for a fixed $\Delta>1$, $\bar \varepsilon>0$ and $\bar \alpha \in \T$ we want to prove that there exists a volume preserving map 
$F\in \text{Diff}^{\omega}_{\Delta}(\mathbb{S}^3)$ which is weakly mixing and satisfies $ \abs{F- \varphi_{\bar \alpha}}_{\Delta}< \bar\varepsilon$.
We claim that $F$ can be found as the limit of a sequence 
$\{F_n\}_{n\geq 0}\subset \text{Diff}^{\omega}_{\Delta}(\mathbb{S}^3)$ 
of the form, for $n\geq 1$,
\begin{equation}
\label{eq_int}
 F_n= H_{n-1}^{-1} \circ \Phi_{q'_{n-1}, A_{n-1}, \alpha_{n}}
 \circ H_{n-1}, 
 \; \text{with} \; H_{n-1}\in \text{Diff}^{\omega}_{\infty}(\mathbb{S}^3, \mu)
\end{equation}
for an appropriate choice of parameters.
Let us show that such a sequence can be built by induction.
Consider first $\alpha_0= p_0/q_0\in \Q$ with
\begin{equation}
\label{eq_diff_inicial}
 \abs{\varphi_{\bar \alpha}- \varphi_{\alpha_0}}_{\Delta}<\bar \varepsilon/4,
\end{equation}
and let $F_0:= \varphi_{\alpha_0}$. We apply Proposition \ref{prop_dunaalaltra} for $V=Id$,  $\alpha=\alpha_0$,  $\varepsilon=\varepsilon_0 = \min \{\bar \varepsilon/4, \gamma \bar{\varepsilon}/4\} $ and $m=1$ to obtain $H_0:=H$ such that, for
$L_0(\varepsilon_0, q_0, H_0)$
sufficiently large for the conclusion of Lemma \ref{lemma_canvi_decomp} (for fixed $\varepsilon=\varepsilon_0/\#\mathcal{C}_{q_0}, q=q_0, K=H_0$) to be satisfied for any \[q'>q_0+L_0(\varepsilon_0, q_0, H_0),\] 
we have
\[A_0:=A,\quad m_0:=\tilde m,\quad  
\alpha_1=p_1/q_1:= \tilde \alpha,\quad q_0+L_0(\varepsilon_0, q_0, H_0)<q_0'<q_1\] and a partial  $q_0'$-decomposition $\eta_{q_0'}$ such that
\begin{equation}
 \label{eq_first_diff}
\abs{F_1-F_0}_{\Delta}<\varepsilon_0
\end{equation}
and 
$F_1$ is $(q_0, q_0', \varepsilon_0, m_0, {H}_0)$-mixing.\newline 
For $n\geq 2$ we can apply again Proposition \ref{prop_dunaalaltra} for $V=g_{q'_{n-2},\; A_{n-2}} \circ H_{n-2} \circ \ldots \circ g_{q_0',\; A_0} \circ H_0$, $\alpha=\alpha_{n-1}$,
\begin{equation}
\label{eq_cond_epsilon}
\varepsilon_{n-1} =2^{-1} q_{n-2}^{-3/2} \min \left\{\varepsilon_{n-2}, \; \gamma \varepsilon_{n-2} \right\}
\end{equation}
and $m=m_{n-2}$ to obtain $H$ such that, for
$L(\varepsilon_{n-1}, q_{n-1}, H\circ V)$ sufficiently large for the conclusion of Lemma \ref{lemma_canvi_decomp}  (for fixed $\varepsilon=\varepsilon_{n-1}/\# \mathcal{C}_{q_{n-1}}$, $q=q_{n-1}$, $K=H_{n-1}:=H\circ V$)  to be satisfied for any 
\[q'>q_{n-1}+L_{n-1}(\varepsilon_{n-1}, q_{n-1}, H\circ V),\]
we have
\[A_{n-1}:=A, \quad m_{n-1}:=\tilde m, \quad \alpha_n:=\tilde \alpha,\quad q_{n-1}+L_{n-1}(\varepsilon_{n-1}, q_{n-1}, H\circ V) <q_{n-1}'<q_n, \] 
and a partial $q_{n-1}'$-decomposition $\eta_{q_{n-1}'}$ such that 
\begin{equation}
 \label{eq_good_dist}
 \abs{F^j_n-F^j_{n-1}}_{\Delta}<\varepsilon_{n-1}, \; \text{for all} \; \;  0\leq  j \leq m_{n-2}
\end{equation}
and $F_n$ is $( q_{n-1}, q_{n-1}', \varepsilon_{n-1},  m_{n-1}, H_{n-1})$-mixing. This finishes the construction of the sequence.
Notice that it follows from Proposition \ref{prop_dunaalaltra} that the sequence $\{m_{n}\}$ is strictly increasing. 
Let us now justify that the limit of $\{F_{n+1}\}_{n\geq 1}$ exists and satisfies the conclusions of Theorem \ref{thm_main}.
It is clear from \eqref{eq_cond_epsilon} and \eqref{eq_good_dist} that $\{F_{n+1}\}_{n\geq 1}$ is a Cauchy sequence, and hence $F=\lim_{n\to \infty} F_n\in \text{Diff}^{\omega}_{\Delta}(\mathbb{S}^3)$. Also $F$ is volume preserving as it is the limit of volume preserving maps. \newline 
As a consequence of our choice of the constants $L_{n-1}(\varepsilon_{n-1}, q_{n-1}, H\circ V)$ for $n\geq 1$ and Lemma \ref{lemma_canvi_decomp} we have that the union of elements in $\nu_{n-1}=\{ H^{-1}_{n-1}(I) \; | \; I \in \eta_{q'_{n-1}}\}$ can approximate up to an $\varepsilon_{n-1}$ error in measure any union of elements in $\mathcal{C}_{q_{n-1}}$. Since $\varepsilon_{n-1}\to 0$ and $\mathcal{C}_{q_{n-1}}\to \varepsilon$, it follows that $\nu_{n-1}$ converges to the decomposition into points as well.\newline
% (recall that $\abs{q_n'-q_{n-1}}$ can be assumed to be arbitrarily large independently of $H_{n-1}$).
It follows from \eqref{eq_cond_epsilon} that for all $n\geq 1$
\begin{equation}
\label{eq_int1}
\abs{F^{m_{n-1}}- F^{m_{n-1}}_{n}}_{\Delta}\leq \gamma \varepsilon_{n-1} q_{n-1}^{-3/2}.
\end{equation}
Thus the sequence $\left\{F_{n+1}\right\}_{n\geq 1}$ satisfies equation \eqref{eq_fast_mixing} for every $n\geq 1$. Since clearly $\varepsilon_{n}\to 0$ and $\{m_{n}\}$ is strictly increasing the assumptions  of Lemma \ref{lemma_wkmix_criteria} are satisfied and thus $F$ is weakly mixing.
Finally due to \eqref{eq_diff_inicial}, \eqref{eq_first_diff}, \eqref{eq_good_dist} and \eqref{eq_cond_epsilon}  we obtain that 
$\abs{F-\varphi_{\bar \alpha}}_{\Delta}<\bar \varepsilon$ and this finishes the proof.
\end{proofof}

\section{Uniform stretching}
\label{sec_unif_stre}
Before continuing with the proof of Proposition \ref{prop_dunaalaltra}, we need to introduce the definition of \textit{uniform stretching}.
\begin{definition}[Uniform stretching] 
\label{def_stretch} 
Given $\varepsilon>0$ and 
$k>0$, we say that a real continuous function $f$ on an interval $I\subset \R$ is $(\varepsilon, k)$-uniformly stretching on $I$ if for $J=[\; \inf_{I}f, \; \sup_{I}f]$
\begin{equation*}
 \lambda(J)\geq k
\end{equation*}
and for any interval $\tilde{J}\subset J$ we have 
\begin{equation*}
 \bigg{|}\cfrac{\lambda(I \cap f^{-1}(\tilde{J}))}{\lambda(I)}-\cfrac{\lambda(\tilde{J})}{\lambda(J)} \bigg{|} \leq \varepsilon \cfrac{\lambda(\tilde{J})}{\lambda(J)} \;.
\end{equation*}
\end{definition}
The main idea behind \textit{uniform stretching} is that the interval $I$ is stretched ``almost linearly'' in measure. 
The following lemma, which can be found in \cite{Fayad} (see Lemma 2), provides a criterion for a function to be uniformly stretching.

\begin{lemma}\label{lemma_stretch_modified} If $f\in C^{2}(\R)$ is monotonic on an interval $I \subset \R$ and 
 \begin{align*}
   & \lambda(J)\geq k, \\
   & \textit{sup}_{I}\;\abs{f''(x)} \lambda(I) \leq \varepsilon \textit{inf}_I\abs{f'(x)},
 \end{align*}
 
 where $J=[ \inf_{I} f, \; \sup_{I} f]$, then $f$ is $(\varepsilon, k)$-uniformly stretching on $I$.
\end{lemma}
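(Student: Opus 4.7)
The plan is to translate the derivative control $\sup_I |f''|\,\lambda(I) \leq \varepsilon \inf_I |f'|$ into tight pointwise bounds on $|f'|$ on $I$, and then push everything through the change of variables formula. First I would assume without loss of generality that $f$ is monotonically increasing on $I$ (otherwise replace $f$ by $-f$, which affects neither the hypothesis nor the conclusion). Setting $m := \inf_I f'$, the mean value theorem applied to $f'$ gives, for any $x_1,x_2 \in I$,
\[
|f'(x_1)-f'(x_2)| \leq \sup_I |f''|\,\lambda(I) \leq \varepsilon\, m,
\]
so that $m \leq f'(x) \leq (1+\varepsilon)m$ for every $x \in I$. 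Note that $m > 0$: indeed $\lambda(J)\geq k > 0$ rules out $f$ being constant on $I$, so $f'$ cannot vanish identically, and combined with the bound on oscillation this forces $m>0$.

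Next, since $f : I \to J$ is a monotonic $C^2$ bijection, for any subinterval $\tilde J \subset J$ the preimage $I \cap f^{-1}(\tilde J)$ is an interval. The change of variables formula gives
\[
\lambda(I \cap f^{-1}(\tilde J)) = \int_{\tilde J} \frac{dy}{f'(f^{-1}(y))}, \qquad \lambda(I) = \int_{J} \frac{dy}{f'(f^{-1}(y))}.
\]
Bounding both integrands with the sandwich $m \leq f' \leq (1+\varepsilon)m$ and dividing, I obtain
\[
\frac{1}{1+\varepsilon}\cdot\frac{\lambda(\tilde J)}{\lambda(J)} \;\leq\; \frac{\lambda(I \cap f^{-1}(\tilde J))}{\lambda(I)} \;\leq\; (1+\varepsilon)\cdot\frac{\lambda(\tilde J)}{\lambda(J)}.
\]
Subtracting $\lambda(\tilde J)/\lambda(J)$ from each term (and using $\varepsilon/(1+\varepsilon)<\varepsilon$ on the lower side) yields exactly the deviation estimate required by Definition \ref{def_stretch}. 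Since the bound $\lambda(J)\geq k$ is given as a hypothesis, this completes the argument.

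I do not anticipate any real obstacle here: the hypotheses are tailor-made so that $|f'|$ is nearly constant on $I$, which forces $f$ to distribute mass on $I$ almost linearly. The only subtle point is to verify that $m > 0$ before inverting $f'$ in the change of variables formula, which follows immediately from $\lambda(J)\geq k > 0$.
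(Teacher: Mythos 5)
Your proof is correct. The paper gives no proof of this lemma (it is imported from Lemma 2 of \cite{Fayad}), and your argument—using the mean value theorem on $f'$ to obtain $m\leq f'\leq(1+\varepsilon)m$ on $I$ with $m=\inf_I\abs{f'}>0$, then estimating $\lambda(I\cap f^{-1}(\tilde J))$ and $\lambda(I)$ via the change of variables and noting $\varepsilon/(1+\varepsilon)\leq \varepsilon$ on the lower side—is precisely the standard argument behind the cited criterion; the only unstated point, that $f(I)$ may omit the endpoints of $J$ when $I$ is not closed, concerns a null set and is harmless.
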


Let us define the set, for $q\geq 16$ and $0\leq c<1$,
 \begin{equation}
\label{eq_Mn}
 M_{q,c}= \bigcup_{k=1}^{4q^2}{\left[\frac{k}{4q^2}- \frac{1}{4q^{5/2}}+ \frac{c}{2}, \frac{k}{4q^2}+\frac{1}{4q^{5/2}}+ \frac{c}{2}\right]} \subset \T.
 \end{equation}
 \vspace{0.1cm}
The criterion in Lemma \ref{lemma_stretch_modified} allows us to prove the following result.
% 
% {\color{red} Write a clear proof for Lemma 5.3, with an order that makes clear the choices of $A, \omega, \tilde{m}$ and how they are related}
\begin{lemma}
 \label{lemma_stretching}
 Given $\rho, \delta>0$, $\omega=p/q \in \Q$, $l\in \N$ and $U\in \text{Diff}^{\omega}_{\infty}(\mathbb{S}^3)$ there exist
 $l < \tilde m \in \N$, $\tilde \omega=\tilde p/ \tilde q\in \Q$, $A>0$ and a partial $q$-decomposition $\eta_q$ 
  such that for all $I \in \eta_q$ 
  \begin{itemize}
   \item[i)] $\lambda^2(\Phi^{\tilde m}_{q,A,\tilde \omega}(I))=1$,
   \item[ii)] if $I\in \tilde{\eta}_{q,c} \times \{ \xi\} \subset \eta_q$ then  $\Pi_{\theta_1} I \notin M_{q,c}$ ,
   \item[iii)] for any interval ${J}\subset \Phi_{q,A, \tilde \omega}^{\tilde m}(I)$ we have
   \begin{equation}
  \label{eq_stretch_phi}
  \bigg{|} \cfrac{\lambda^1(I \cap \Phi_{q,A, \tilde \omega}^{- \tilde m}({J}))}{\lambda^1(I)}- \lambda^2({J}) \bigg{|} \leq \rho \lambda^2({J}),
 \end{equation}
 \item[iv)] $\abs{U^{-1}\circ \Phi^{i}_{q, A, \tilde{\omega}}\circ U - U^{-1}\circ \varphi^i_{\omega}\circ U}_{\Delta}<\delta, \quad 1<i\leq l$.
 \end{itemize}
\end{lemma}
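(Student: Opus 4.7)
The plan rests on the identity $g_{q,A}\circ\varphi_{p/q} = \varphi_{p/q}\circ g_{q,A}$ (the cosine in $\chi_q$ is $\varphi_{p/q}$-invariant when $q\omega\in\Z$), and on the choice
\[
  \tilde\omega \;:=\; \omega + \frac{1}{2q\tilde m}\;\in\;\Q,
\]
for an integer $\tilde m > l$ to be fixed later. This makes $2\pi q\tilde m\tilde\omega \equiv \pi \pmod{2\pi}$, so after the intermediate rotation $\varphi_{\tilde\omega}^{\tilde m}=\varphi_{\tilde m\tilde\omega}$ the cosine argument inside $\chi_q$ is shifted by exactly $\pi$. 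I would pick the amplitude $A$ first, depending only on $q$ and $\rho$, and choose $\tilde m$ afterwards, depending on $A,l,\delta$ and $U$; the integers $\tilde p,\tilde q$ are read off from $\tilde\omega$.

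Writing $\Phi^{\tilde m}_{q,A,\tilde\omega} = g_{q,-A}\circ\varphi_{\tilde m\tilde\omega}\circ g_{q,A}$ in Hopf coordinates and using the $\pi$ phase shift shows that $\chi_q$ at the intermediate point equals the negative of its value at the starting point, so the two $g$-displacements add rather than cancel. Restricting to $N_{q,c}\times\{\xi\}$, where $\theta_1(1+q^{-1})-\theta_2 = 2\theta_1-c$, the $\theta_2$-component of $\Phi^{\tilde m}_{q,A,\tilde\omega}$ becomes
\[
  f(\theta_1)= -(1-q^{-1})\theta_1 + 2(1+q^{-1})A\rho_{q,\xi}\cos(4\pi q^2\theta_1-2\pi q^2c)+\mathrm{const},
\]
with $\rho_{q,\xi}:=\sin(\xi)^{q(q+1)}\cos(\xi)^{q^2}$. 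I would then build $\eta_q$ by partitioning, for each $c\in[0,1)$ and $\xi\in\mathcal F_q$, the set $\Pi_{\theta_1}N_{q,c}\setminus M_{q,c}$ into subintervals of length $\leq c_0\rho q^{-5/2}$ for a small absolute constant $c_0$. Because $\lambda(M_{q,c})\leq 2/\sqrt q$, the inequalities \eqref{eq_first_cond_dec}, \eqref{eq_second_cond_dec} and item ii) hold by construction. To obtain iii) I apply Lemma \ref{lemma_stretch_modified} to $f$: the condition $\theta_1\notin M_{q,c}$ keeps $|\sin(4\pi q^2\theta_1-2\pi q^2c)|\gtrsim 1/\sqrt q$; the length bound keeps the sine of one sign on $I$, so for $A$ large enough (depending on $q,\rho$ and $\min_{\xi\in\mathcal F_q}\rho_{q,\xi}$) we get $\inf_I|f'|\gtrsim A\rho_{q,\xi}q^{3/2}$ and $\sup_I|f''|\lesssim A\rho_{q,\xi}q^4$, hence $\sup|f''|\lambda^1(I)/\inf|f'|\lesssim q^{5/2}\lambda^1(I)\leq \rho$. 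Enlarging $A$ still further forces $\int_I|f'|\geq 1$, which gives i), supplies the hypothesis $\lambda(J)\geq 1$ of Lemma \ref{lemma_stretch_modified}, and yields iii).

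For iv), the commutation identity gives
\[
  \Phi^i_{q,A,\tilde\omega}-\varphi_\omega^i \;=\; g_{q,A}^{-1}\circ(\varphi_{\tilde\omega}^i-\varphi_\omega^i)\circ g_{q,A},
\]
and $\|\varphi_{\tilde\omega}^i-\varphi_\omega^i\|_{B_\Delta}\leq 2\pi i\Delta|\tilde\omega-\omega| = \pi i\Delta/(q\tilde m)$. Since $g_{q,\pm A}$ are entire with derivative bounds on any fixed compact set controlled by some $C(q,A,\Delta)$, the chain rule together with conjugation by the fixed $U$ bounds $|U^{-1}\Phi^i U - U^{-1}\varphi_\omega^i U|_\Delta$ by $C(U,q,A,\Delta)l/\tilde m$ for $1\leq i\leq l$. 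Choosing $\tilde m>\max(l, C(U,q,A,\Delta)l/\delta)$ closes iv).

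The main obstacle is the bookkeeping of constants. The amplitude $A$ needed for stretching must dominate $1/\rho_{q,\xi}$, which is as small as $q^{-O(q^{2})}$ for $\xi\in\mathcal F_q$, and the Lipschitz bounds of $g_{q,\pm A}$ blow up correspondingly. What saves the plan is precisely the commutation $g_{q,A}\circ\varphi_{p/q}=\varphi_{p/q}\circ g_{q,A}$: it makes the bound in iv) proportional to $|\tilde\omega-\omega|=1/(2q\tilde m)$, so a final, \emph{independent} choice of a very large $\tilde m$ after $A$ has already been fixed absorbs the $A$-dependent factors without disturbing i)--iii).
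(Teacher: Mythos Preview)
Your overall strategy and the treatment of iv) are sound; the exact choice $\tilde\omega=\omega+1/(2q\tilde m)$ is in fact slightly cleaner than the paper's approximate version (which only arranges $q\tilde m\tilde\omega\approx 1/2 \pmod 1$ and carries a small error term $\sigma$). The gap is in how you build $\eta_q$ and deduce iii). You fix the pieces $I$ to have length $\le c_0\rho q^{-5/2}$ \emph{before} choosing $A$, and then enlarge $A$ so that $\int_I|f'|\ge 1$. This yields $K:=\lambda\bigl(f(\Pi_{\theta_1}I)\bigr)\ge 1$ but in general $K>1$ (it depends on $\xi$ through $\rho_{q,\xi}$, on the position of $I$ inside $\T\setminus M_{q,c}$, and it grows linearly with $A$). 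Lemma~\ref{lemma_stretch_modified} then only gives
\[
 \left|\frac{\lambda^1(I\cap \Phi^{-\tilde m}(J))}{\lambda^1(I)}-\frac{\lambda^2(J)}{K}\right|\le \rho\,\frac{\lambda^2(J)}{K},
\]
which is not \eqref{eq_stretch_phi}: the latter compares to $\lambda^2(J)$, not $\lambda^2(J)/K$, and this forces $K=1$. (The same condition $K=1$ is what later makes the partition $\mathcal J$ of \eqref{eq_partition} well-defined: if the $\theta_2$-projection of $\Phi^{\tilde m}(I)$ wraps around $\T$ more than once, the preimage there of a $\theta_2$-interval is a union of several arcs, not one.)

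The paper fixes this by reversing the dependence: choose $A$ large first (depending on $q,\rho$), and only \emph{then} select each $I$ inside a component of $(\T\setminus M_{q,c})\times\{\xi\}$ so that $\lambda\bigl(f_2(\Pi_{\theta_1}I)\bigr)=1$ exactly; this is possible because $f_2$ is monotone there and $\inf|f_2'|$ is already large enough that such an $I$ fits inside a single component. One then reads off $\lambda^1(I)\le 1/\inf|f_2'|$, hence
\[
 \frac{\sup_I|f_2''|\,\lambda^1(I)}{\inf_I|f_2'|}\;\le\;\frac{\sup_I|f_2''|}{(\inf_I|f_2'|)^2}\;\le\;\frac{C(q)}{A}\;\le\;\rho,
\]
and Lemma~\ref{lemma_stretch_modified} now yields iii) directly. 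The verifications of \eqref{eq_first_cond_dec} and \eqref{eq_second_cond_dec} then go through essentially as you indicate.
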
 

\vspace{0.2cm}

\begin{proof}
Let us first write explicitly, for any $m\in \N$, the $m {\text{th}}$ iterate of the map $\Phi_{q,A, \tilde \omega}$ restricted to the first two coordinates of
a connected set $I \subset N_{q,c }\times \{\xi\}$, for any $\xi \in \mathcal{F}_q$, $c\in [0,1)$. It follows from a computation that, for any $m\in \N$
% \begin{equation}
% \label{eq_bn}
%  B_{q,c}= \bigcup_{k=1}^{4q}{\left[\frac{k}{4q}- \frac{1}{4q^{3/2}}+ \frac{c}{2}, \frac{k}{4q}+\frac{1}{4q^{3/2}}+ \frac{c}{2}\right]} \subset \T,
% \end{equation}
% \vspace{0.1cm}in \eqref{eq_nqc} and $g_{q, A}$ in \eqref{eq_g_hopf},
% $\Phi^{m_n}_{n, A_n} \big{|}_{I_{c,n}}: I_{c,n} \rightarrow \T^2$, 
% which can then be written as 
\begin{small}
\begin{align*}
&\Phi_{q, A, \tilde \omega}^{m} \big{|}_{I}
= (\theta_1+ m \tilde \omega, \theta_2 +m \tilde \omega)\\
&+A r_1^{q(q+1)} r_2^{q^2}(\cos(2\pi q^2 (2\theta_1-c)), (1+ q^{-1})\cos(2\pi q^2 (2\theta_1-c)))\\
&-A r_1^{q(q+1)} r_2^{q^2}(\cos(2\pi (q^2(2\theta_1-c)+ qm \tilde \omega)), (1+ q^{-1})\cos(2\pi (q^2(2\theta_1-c)+ qm \tilde \omega))). 
\end{align*}
\end{small}
Let us define (we assume without loss of generality $q$ not to be a factor of $\tilde{q}$)
\begin{equation}
\label{eq_sequence_m}
\tilde m:=\min \left\{ \tilde{q} \leq m\leq 2\tilde q \; s.t. \; \inf_{k\in \Z}\bigg{|}q m \tilde{\omega} -\cfrac{1}{2}+k \bigg{|}\leq \frac{1}{\tilde q}\right\}.
\end{equation}
For this particular iterate we obtain $\Phi_{q, A, \tilde \omega}^{\tilde m} \big{|}_{I}(\theta_1, \theta_2)
= (f_1(\theta_1) \; \text{mod} \; 1, f_2(\theta_1) \; \text{mod} \; 1)$, with
\begin{align}
 &f_1(\theta_1)= \theta_1+ \tilde m\tilde \omega + 2A r_1^{q(q+1)} r_2^{q^2}(\cos(2\pi q^2(2\theta_1-c))-\sigma(\theta_1)/2), \label{eq_f1}\\
 &f_2(\theta_1)= (q^{-1}-1)\theta_1+c +\tilde m\tilde \omega +2(1+ q^{-1})A r_1^{q(q+1)} r_2^{q^2}(\cos(2 \pi q^2(2\theta_1-c)) - \sigma(\theta_1)/2), 
 \label{eq_f2}
\end{align}
where \[\sigma(\theta_1):=\cos(2\pi (q^2(2\theta_1-c)+ q \tilde{m} \tilde \omega))+\cos(2\pi q^2(2\theta_1-c)).\]
By the mean value theorem and assuming
$\tilde q$ to be sufficiently large w.r.t. $A$ and $q$, we can assume without loss of generality that 
\begin{equation}
\label{eq_wrt_qA}
4A r_1^{q(q+1)}r_2^{q^2}\abs{\sigma'}\leq 1, \quad
4A r_1^{q(q+1)} r_2^{q^2}\abs{\sigma''}\leq 1.
\end{equation}

Consider a connected set $I \subset N_{q,c }\times \{\xi\}$ such that
$\Pi_{\theta_1} I \subset \mathbb{T} \setminus M_{q,c}$. Assume also that $\lambda^2(\Phi^{\tilde m}_{q,A, \tilde \omega}(I))=1$. Since $f_2$ is a monotonic function on the connected sets of $\T \setminus M_{q,c}$, the latter assumption can always  be satisfied if it does not imply that the size of $\Pi_{\theta_1} I$ is larger than the size of the connected components in  $\T \setminus M_{q,c}$. That this is not the case can be ensured if  the derivative of $f_2$ is uniformly sufficiently large w.r.t. $q$, something that we will see one can assume by considering $A$ to be sufficiently large.\newline

It follows from \eqref{eq_xi_decomp}  that the term $r_1^{q(q+1)} r_2^{q^2}$ is uniformly bounded away from zero (with a lower bound depending on $q$) for $\xi \in \mathcal{F}_q$ and 
\begin{equation}
\label{eq_sinus}
\abs{\sin(2\pi q^2(2\theta_1-c))}\geq q^{-\frac{1}{2}}
\end{equation}
on $\T \setminus M_{q,c}$.
The latter inequalities together with the assumption \eqref{eq_wrt_qA} lead to the estimates, for $A$ sufficiently large with respect to $q$,
 \begin{align}
  &\inf_{\T \setminus M_{q,c}}{\abs{f_2'(\theta_1)}}\geq A C_1(q) \label{eq_lapremiere}  ,\\
  &\sup_{\T \setminus M_{q,c}}{\abs{f_2''(\theta_1)}}\leq A C_2(q) \label{eq_ladeuxieme}
 \end{align}
 for some positive constants $C_1(q)$ and $C_2(q)$ depending on $q$.
We have already seen that since  ${\Pi_{\theta_1}I\subset \T \setminus M_{q,c}}$, $I$  is connected and the function $f_2$ is monotonic on $\Pi_{\theta_1}I$ , if $\lambda(f_2(\Pi_{\theta_1} I))=1$ is satisfied we necessarily have that
\begin{equation}
\label{eq_size_interval}
\lambda^1(I)\leq 1/\inf_{\T \setminus M_{q,c}}{\abs{f_2'(\theta_1)}}. 
\end{equation}
In particular if $A$ is sufficiently large w.r.t. $q$ then
\begin{equation}
\label{eq_mida_interval}
\lambda^1(I)\leq q^{-3}
\end{equation}
by \eqref{eq_size_interval} and so $I$ satisfies \eqref{eq_second_cond_dec}. 
It follows from equations \eqref{eq_lapremiere}, \eqref{eq_ladeuxieme} that there exists $C(q)>0$ such that if $A \geq C(q)/\rho$ then
\begin{equation}
\label{eq_implica_stretching}
 \cfrac{\sup_{\theta_1 \in \Pi_{\theta_1} I}{\abs{f_2''(\theta_1)}} \lambda^1(I)}{\inf_{\theta_1\in \Pi_{\theta_1}}{\abs{f_2'(\theta_1)}}} \leq 
 C(q)/A \leq \rho.
\end{equation}
Therefore choosing $A\geq C(q)/\rho$ and sufficiently large so that \eqref{eq_mida_interval} is satisfied, according to Lemma \ref{lemma_stretch_modified} $f_2$ is  $(\rho, 1)$-uniformly stretching on the interval $\Pi_{\theta_1}I$. This implies \eqref{eq_stretch_phi}. Notice that the assumptions above require that we assume that $A$ is sufficiently large w.r.t $q$ and $\rho$, and that $\tilde{q}$ is sufficiently large w.r.t. $q, \rho$ and $A$.  \newline

In short, so far we have proved that there exists a choice of $A$, $\tilde{\omega}=\tilde{p}/\tilde{q}$ and $\tilde{m}$ for which for any $0\leq c <1$ and $\xi \in \mathcal{F}_q$ , any connected set $I \subset N_{q,c}\times \{\xi\}$ with $\Pi_{\theta_1}I \subset \T \setminus M_{q,c}$ and $\lambda(f_2(\Pi_{\theta_1}I))=1$ satisfies that $\lambda^{1}(I)<q^{-3}$ and that $f_2$ is $(\rho,1)$-uniformly stretching on $\Pi_{\theta_1}I$. 
\newline

% Let us now justify that we can define a partial $q$-decomposition $\eta_q$ with all $I\in \eta_q$ satisfying the conditions  ${\Pi_{\theta_1}I\subset \T \setminus M_{q,c}}$ and $\lambda^2(\Phi^{\tilde m}_{q,A, \tilde \omega}(I))=1$, hence satisfying the conclusions of the lemma.
% It is clear that we can choose the connected sets $I$ to satisfy  $\Pi_{\theta_1} I \subset \mathbb{T} \setminus M_{q,c}$ just by restricting our choice to the complement of $M_{q,c}$, because due to the estimate \eqref{eq_mida_interval} we know that if we choose them sufficiently large to satisfy $\lambda^2(\Phi^{\tilde m}_{q,A, \tilde \omega}(I))=1$, they are still sufficiently small to fit into the connected sets of $\mathbb{T} \setminus M_{q,c}$. 
 Let us now justify that we can define a partial $q$-decomposition $\eta_q$ with all $I\in \eta_q$ satisfying the conditions  $i), ii)$ and $iii)$. 
It follows from combining the estimate \eqref{eq_mida_interval} and the fact that the set $M_{q,c}$ has a small measure, namely $\lambda(M_{q,c})=2/\sqrt{q}$, that we can indeed choose a collection of connected sets in $\tilde{\eta}_{q,c}\times \{\xi\}$ for every $0\leq c  <1$ and $\xi \in \mathcal{F}_q$ as above in such a way that equation \eqref{eq_first_cond_dec} is satisfied (so that their union is large in measure). Condition \eqref{eq_second_cond_dec} follows directly from \eqref{eq_mida_interval}. Thus according to Definition \ref{def_partial_q_decomposition} such a collection of sets is a partial $q$-decomposition. \newline

%%NO SABEMM SI ÉS NECESSARI
% Finally,  for any $0\leq c <1$, in each connected component of $N_{q,c}$ whose projection to $\theta_1$ lies outside $M_{q,c}$ the map $f_2$ is monotonic, and so we can choose all $I\in \tilde{\eta}_{q,c}\times \{ \xi\}$ in such a way that $\lambda(f_2(\Pi_{\theta_1} I))=1$. 
%%NO SABEM SI ÉS NECESSARI
Assume that we have restricted the choice of parameters $A, \tilde{\omega}$ and $\tilde{m}$ in such a way that they satisfy the relations above, and notice that we can assume without loss of generality that $\tilde{q}$ is arbitrarily large.
We can thus assume that $\tilde{\omega}= \tilde{p}/\tilde{q}$ was chosen with $\tilde{q}$ sufficiently large w.r.t. $l$, $q$ and $A$ in such a way that 
$\norm{\text{D} g_{q,A}\circ U}_{B_{R}} \; l \;  \abs{\tilde{\omega}-\omega}<\delta$, where $R>0$ is sufficiently large so that $g_{q,A}\circ U(B_{\Delta})\subset B_R$. Then we have
\begin{align*}
&\abs{U^{-1}\circ g_{q,A}^{-1} \circ \varphi^i_{\tilde{\omega}}\circ g_{q,A} \circ U - U^{-1}\circ \varphi^i_{\omega}\circ U}_{\Delta} \\
&=\abs{U^{-1} \circ g_{q,A}^{-1} \circ \varphi^{i}_{\tilde{\omega}}\circ g_{q,A} \circ U - U^{-1}\circ g_{q,A}^{-1} \circ \varphi^i_{\omega}\circ g_{q,A} \circ U}_{\Delta}
<\norm{\text{D} g_{q,A}\circ U}_{B_{R}} \; l \;  \abs{\tilde{\omega}-\omega}<\delta
\end{align*}
for $1<i\leq l$. This implies $iv)$.
At the same time, requiring $\tilde{q}$ to be even  larger if needed also allows us to fulfil the requirement $l<\tilde{m}$, because it follows from  \eqref{eq_sequence_m} that $\tilde{m}\to \infty$ as $\tilde{q}\to \infty$. This finishes the proof.
\end{proof}

\section{Proof of the inductive proposition}
\label{sec_prop}
In this section we complete the proof of Theorem \ref{thm_main}
by proving Proposition \ref{prop_dunaalaltra}. We will need a couple of preliminary lemmas.
Let us show first that Proposition \ref{prop_unique_erg} implies that for any $\varepsilon>0, \alpha= p/q\in \T$ and $N\in \N$ sufficiently large, we can find a $(q, \varepsilon, N)$-ergodic map arbitrarily close to $\varphi_{\alpha}$ in the analytic topology. This is a particular case of the following statement.  

\begin{lemma}
 \label{lemma_ergo}
 For any $\mathbbm{a}={p}/{q} \in \Q$, $\mathbbm{e}, \mathbbmtt{k}>0$, $\mathbbm{U}\in \text{Diff}^{\omega}_{\infty}(\mathbb{S}^3, \mu)$ and $\mathbbm{m}\in \N$ there exist
 $\mathbbm{a}'=p'/q'\in \Q$ and $\mathbbm{H}\in \text{Diff}^{\omega}_{\infty}(\mathbb{S}^3, \mu)$ such that 
  \begin{equation}
  f:= \mathbbm{U}^{-1}\circ \mathbbm{H}^{-1} \circ \varphi_{\mathbbm{a}'}\circ \mathbbm{H} \circ \mathbbm{U}
  \end{equation}
  is $(q, \mathbbm{e} ,q')$-ergodic and $\abs{f^i-\mathbbm{U}^{-1}\circ \varphi^i_{\mathbbm{a}}\circ \mathbbm{U}}_{\Delta}<\mathbbmtt{k}, \; 0 < i \leq \mathbbm{m}$. Furthermore, $q'$ can be assumed to be arbitrarily large independently of $\mathbbm{a}$, $\mathbbm{e}, \mathbbmtt{k}, \mathbbm{U}$, $\mathbbm{m}$ and $\mathbbm{H}$.
\end{lemma}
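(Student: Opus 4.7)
I would derive the lemma from Proposition \ref{prop_unique_erg} applied with $\beta=\mathbbm{a}$ and a suitably small parameter $\varepsilon$. That produces sequences $\{\beta_l\}\subset\Q$ (with $\beta_{l+1}=p_{l+1}/q_{l+1}$) and $\{h_l\}\subset\text{Diff}^{\omega}_{\infty}(\mathbb{S}^3,\mu)$ such that $f^{*}_{l}:=\mathcal{H}_l^{-1}\circ\varphi_{\beta_{l+1}}\circ\mathcal{H}_l$, with $\mathcal{H}_l:=h_l\circ\cdots\circ h_0$, converges in $\text{Diff}^{\omega}_{\Delta}(\mathbb{S}^3)$ to a uniquely ergodic limit $f^{*}$ arbitrarily close to $\varphi_{\mathbbm{a}}$. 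For an $l$ to be chosen later I would set $\mathbbm{H}:=\mathcal{H}_l$, $\mathbbm{a}':=\beta_{l+1}$, $q':=q_{l+1}$ and define $f:=\mathbbm{U}^{-1}\circ f^{*}_{l}\circ \mathbbm{U}=\mathbbm{U}^{-1}\circ\mathbbm{H}^{-1}\circ\varphi_{\mathbbm{a}'}\circ\mathbbm{H}\circ\mathbbm{U}$, which already has the form required by the lemma.

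\textbf{Approximate ergodicity.} Unique ergodicity of $f^{*}$ on the compact manifold $\mathbb{S}^3$ gives uniform convergence of Birkhoff averages $\frac{1}{n}\sum_{k=0}^{n-1}\psi\circ f^{*k}\to\int\psi\,d\mu$ for every continuous $\psi$. For each of the finitely many $B\in\mathcal{C}_q$ the boundary of $\mathbbm{U}(B_{\pm\mathbbm{e}})$ has zero volume, so I would sandwich $\mathbbm{1}_{\mathbbm{U}(B_{\pm\mathbbm{e}})}$ between continuous functions $\psi^{\pm}_B$ with $\int(\psi^{+}_B-\psi^{-}_B)\,d\mu<\mathbbm{e}\mu(B)$. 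Unique ergodicity then yields $N_0\in\N$ such that for all $n\geq N_0$, $y\in\mathbb{S}^3$ and $B\in\mathcal{C}_q$,
\begin{equation*}
\bigl|\tfrac{1}{n}\sum_{k=0}^{n-1}\mathbbm{1}_{\mathbbm{U}(B_{\pm\mathbbm{e}})}(f^{*k}(y))-\mu(B_{\pm\mathbbm{e}})\bigr|<2\mathbbm{e}\mu(B).
\end{equation*}
Next I would pick $l$ large enough so that $q_{l+1}\geq N_0$ (and as large as otherwise required) and so that $f^{*k}_l$ is uniformly close to $f^{*k}$ on $\mathbb{S}^3$ for $k$ up to $q_{l+1}$, which is a standard byproduct of the fast convergence built into the AbC construction underlying Proposition \ref{prop_unique_erg}. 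Combining uniform approximation of the indicators by $\psi^{\pm}_B$ with closeness of iterates gives the same estimate, with $3\mathbbm{e}\mu(B)$ in place of $2\mathbbm{e}\mu(B)$, for $f^{*}_{l}$ in the window $N_0\leq n\leq q_{l+1}$. For $n>q_{l+1}$ I would exploit $f^{*q_{l+1}}_l=\text{id}$: writing $n=aq_{l+1}+r$ with $0\leq r<q_{l+1}$, the Birkhoff sum splits as $aS_{q_{l+1}}(y)+S_r(y)$, the first term being controlled by the estimate at $n=q_{l+1}$, the second by the estimate at $n=r$ when $r\geq N_0$ and by the trivial bound $|S_r/n|\leq N_0/q_{l+1}$ otherwise, which is negligible once $q_{l+1}$ is large. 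Since $\mathbbm{1}_{B_{\pm\mathbbm{e}}}(f^k(x))=\mathbbm{1}_{\mathbbm{U}(B_{\pm\mathbbm{e}})}(f^{*k}_l(\mathbbm{U}(x)))$ and $\mathbbm{U}$ preserves $\mu$, the $(q,\mathbbm{e},q')$-ergodicity of $f$ follows.

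\textbf{Analytic closeness, arbitrariness of $q'$, main obstacle.} The bound $\abs{f^i-\mathbbm{U}^{-1}\circ\varphi^{i}_{\mathbbm{a}}\circ\mathbbm{U}}_{\Delta}<\mathbbmtt{k}$ for $0<i\leq\mathbbm{m}$ reduces, via the operator-norm continuity of composition by $\mathbbm{U}^{\pm 1}$ on a complex ball containing $\mathbbm{U}(B_{\Delta})$, to making $|f^{*i}_{l}-\varphi^{i}_{\mathbbm{a}}|$ small on that ball, which follows by taking $\varepsilon$ small in Proposition \ref{prop_unique_erg} and $l$ large. Since $q_{l+1}\to\infty$ as $l\to\infty$, $q'$ can be taken arbitrarily large independently of $\mathbbm{a}$, $\mathbbm{e}$, $\mathbbmtt{k}$, $\mathbbm{U}$, $\mathbbm{m}$ and $\mathbbm{H}$. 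The main obstacle is precisely the step of controlling the iterates $f^{*k}_l$ against $f^{*k}$ for times $k$ as long as the whole period $q_{l+1}$: without such control one cannot transfer the ergodic estimate at scale $q_{l+1}$ from the uniquely ergodic limit $f^{*}$ to the finite truncation $f^{*}_{l}$, and this is exactly the kind of fast-convergence estimate that is intrinsic to the AbC scheme used to prove Proposition \ref{prop_unique_erg}.
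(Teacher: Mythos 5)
Your overall route is the same as the paper's: apply Proposition \ref{prop_unique_erg}, truncate at a far stage $l$, transfer the uniform Birkhoff estimates from the uniquely ergodic limit to the periodic truncation, and conjugate by $\mathbbm{U}$ (your handling of $n>q'$ via periodicity is fine, and in fact more explicit than the paper's, which only verifies the estimate at $n=q'$). The problem is the step you yourself flag as the main obstacle: you \emph{assert}, but do not prove, that $l$ can be chosen so that the iterates $f_l^{*k}$ stay uniformly close to $f^{*k}$ for all $k$ up to the full period $q_{l+1}$. This does not follow from the statement of Proposition \ref{prop_unique_erg}: the proposition only gives $\abs{f_l^*-f^*}_{\Delta}\to 0$ at an unspecified rate, while naive propagation of map-closeness to orbit segments of length $q_{l+1}$ costs a factor of order $q_{l+1}\,\mathrm{Lip}(f^*)^{q_{l+1}}$, which is not controlled relative to the convergence rate. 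Since the whole point of the lemma is a quantitative Birkhoff estimate at time scale exactly $q'$, leaving this step as ``a standard byproduct of the AbC scheme'' is a genuine gap, and it is precisely the part where the actual work of the paper's proof lies.

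The paper closes this gap with two ingredients you do not use. First, the commutation property $h_j^{-1}\circ\varphi_{\beta_j}\circ h_j=\varphi_{\beta_j}$ (item i) of Proposition \ref{prop_unique_erg}) implies that $f_j$ and $f_{j+1}$ are conjugates of the \emph{rotations} $\varphi_{\beta_j}$ and $\varphi_{\beta_{j+1}}$ by the \emph{same} map $H_j\circ\mathbbm{U}$, so that $\abs{f_{j+1}^i-f_j^i}_{\Delta}\leq \norm{\text{D}(H_j\circ\mathbbm{U})}_{B_{R_j}}\, i\,\abs{\beta_{j+1}-\beta_j}$ for all $i$, with no iteration of Lipschitz constants of the nonlinear maps. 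Second, one may assume (using the freedom in the construction of \cite{FK} to take $\beta_{j+1}$ as close to $\beta_j$ as desired at each stage) the summable smallness condition \eqref{eq_growth} on $\abs{\beta_{j+1}-\beta_j}$, calibrated to $\mathbbm{m}$, $\mathbbmtt{k}$, $q_j$, $\gamma\mathbbm{e}q_j^{-3/2}$ and $\norm{\text{D}(H_j\circ\mathbbm{U})}_{B_{R_j}}$. Summing the telescoping series then gives simultaneously $\abs{f_l^i-\mathbbm{U}^{-1}\circ\varphi_{\mathbbm{a}}^i\circ\mathbbm{U}}_{\Delta}<\mathbbmtt{k}$ for $0<i\leq\mathbbm{m}$ and $\abs{\tilde f^i-f_l^i}_{\Delta}<\gamma\mathbbm{e}q_l^{-3/2}$ for $0<i\leq q_l-1$; the latter bound, matched to the boundary margins \eqref{eq_dis_bs1}--\eqref{eq_dis_bs2}, is what allows the sandwich $\mathbbm{1}_{B_{-2\mathbbm{e}}}\circ\tilde f^k\leq\mathbbm{1}_{B_{-\mathbbm{e}}}\circ f_l^k\leq\mathbbm{1}_{B}\circ\tilde f^k\leq\mathbbm{1}_{B_{\mathbbm{e}}}\circ f_l^k\leq\mathbbm{1}_{B_{2\mathbbm{e}}}\circ\tilde f^k$ (the paper uses these margin sets where you use continuous sandwich functions, a cosmetic difference). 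Your analytic-closeness step for $i\leq\mathbbm{m}$ is also handled more cleanly by this telescoping than by compounding $\abs{f^*-\varphi_{\mathbbm{a}}}_{\Delta}<\varepsilon$ through $\mathbbm{m}$ compositions, though that part is only a minor technical matter; the unproven long-time closeness up to time $q_{l+1}$ is the real deficiency.
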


\begin{proof}
 It follows from Proposition \ref{prop_unique_erg} that we can find a sequence of maps 
 $${H_j= h_j \circ \ldots \circ h_0}\subset \text{Diff}^{\omega}_{\infty}(\mathbb{S}^3, \mu),$$
 $h_{-1}=id$ and a sequence $\{\beta_j\}=\{p_j/q_j \}\subset \Q $ with
 $\beta_0 = \mathbbm{a}$ such that 
 $${\lim_{j \to \infty} H_{j-1}^{-1} \circ \varphi_{\beta_{j}} \circ H_{j-1} }$$ 
 is uniquely ergodic. Thus $$\tilde{f}:=\lim_{j\to \infty} f_j =\lim_{j \to \infty} \mathbbm{U}^{-1}\circ H_{j-1}^{-1} \circ \varphi_{\beta_{j}} \circ H_{j-1} \circ \mathbbm{U}$$ is also 
 uniquely ergodic with respect to $\mu$, because $\mathbbm{U}$ is assumed to be volume preserving.
%  We need to make some assumptions about how fast $\beta_l$ converges. 
 For any fixed $j\in \N$, consider a ball $B_{R_j}$ such that $H_j \circ \mathbbm{U} (B_{\Delta}) \subset B_{R_j}$. We can assume without loss of generality that the sequence $\{\beta_j\}_{j=0}^{\infty}$ satisfies
 {\small
 \begin{equation}
  \label{eq_growth}
   \abs{\beta_{j+1}-\beta_{j}}< \frac{1}{2^{j+1}} \min \left\{(\mathbbm{m} \norm{\text{D}(H_j \circ \mathbbm{U})}_{B_{R_j}})^{-1}\mathbbmtt{k}, \; (q_j \norm{\text{D}(H_j \circ \mathbbm{U})}_{B_{R_j}})^{-1} \gamma \mathbbm{e} q_j^{-3/2} \right\}. \\
 \end{equation}
 }
 Then from $i)$ in Proposition \ref{prop_unique_erg}
 \begin{align*}
  &\abs{f^i_{j+1}-f^i_{j}}_{\Delta}=\abs{U^{-1}\circ H_{j}^{-1}\circ \varphi^i_{\beta_{j+1}} \circ H_j \circ \mathbbm{U} - \mathbbm{U}^{-1}\circ H_{j}^{-1}\circ \varphi^i_{\beta_{j}} \circ H_{j}\circ \mathbbm{U}}_{\Delta}\\
  &\leq \norm{\text{D}( H_j \circ \mathbbm{U})}_{B_{R_j}} \mathbbm{m} \abs{\beta_{j+1}- \beta_j}, \quad 0 < i \leq \mathbbm{m}
  \end{align*}
  and
  \begin{equation*}
  \abs{f^i_{j+1}-f^i_{j}}_{\Delta}
  \leq \norm{\text{D}(H_j \circ U)}_{B_{R_j}} (q_j-1) \abs{\beta_{j+1}- \beta_j},
   \quad 0 < i\leq q_j-1.
 \end{equation*}
 Therefore due to \eqref{eq_growth} we obtain that for any $l\in \N$
 \begin{align}
 & \abs{f^i_{l}-\mathbbm{U}^{-1}\circ \varphi_{\mathbbm{a}}^i \circ \mathbbm{U}}_{\Delta}<\mathbbmtt{k}, \quad 0< i \leq \mathbbm{m}, \nonumber\\
 & \abs{\tilde{f}^i-f^i_{l}}_{\Delta}< \gamma \mathbbm{e} q_l^{-3/2}, \quad 0< i \leq q_l-1. \label{eq_distance}
\end{align}
%  In view of the expression for the metric 
%   \eqref{eq_metric} in $\mathbb{S}^3$, and 
%   that for $(\theta_1, \theta_2, \xi)\in B\in \mathcal{D}_q$ the functions $\sin(\xi)$ and $\cos(\xi)$ are uniformly bounded away from $0$,
%   for all $B \in \mathcal{D}_q$ there exist sets $B_1, B_2 \subset \T^2 \times \mathcal{F}_q$ with $B_1\subset B \subset B_2$ and $\delta(q)>0$ small enough so that
% We also have that for any $A\in \mathcal{C}_{q_l}$
% $\text{dist}(\partial A_1, \partial A_2), \text{dist}(\partial A_2, \partial A), \text{dist}(\partial A, \partial A_3), \text{dist}(\partial A_3, \partial A_4)\geq \pi^{2} q_l^{-6}$.
For all $l$ sufficiently large,
due to the fact that $\tilde{f}$ is uniquely ergodic (and so we have uniform convergence of Birkhoff sums), for all $x \in \mathbb{S}^3,  B\in \mathcal{C}_q$ and any $\bar{B}\in \{B_{-2\mathbbm{e}}, B_{2\mathbbm{e}}, B \}$ we have
\begin{equation*}
 \bigg{|}\frac{1}{q_l}\sum_{k=0}^{q_l-1}{\mathbbm{1}_{\bar B} \tilde{f}^{k}(x)- \mu(\bar B)}\bigg{|}< \frac{\mathbbm{e}}{2} \mu(\bar B).
\end{equation*}
Therefore for a fixed $l$ large enough so that the inequalities above hold, using \eqref{eq_dis_bs1}, \eqref{eq_dis_bs2} and \eqref{eq_distance} we obtain
\begin{small}
\begin{equation*}
\sum_{k=0}^{q_l-1}\mathbbm{1}_{B_{-2\mathbbm{e}}} \tilde{f}^{k}(x) \leq \sum_{k=0}^{q_l-1}\mathbbm{1}_{B_{-\mathbbm{e}}} f_l^{k}(x)\leq \sum_{k=0}^{q_l-1}\mathbbm{1}_{B} \tilde{f}^{k}(x)\leq \sum_{k=0}^{q_l-1}\mathbbm{1}_{B_{\mathbbm{e}}} f_l^{k}(x)\leq \sum_{k=0}^{q_l-1}\mathbbm{1}_{B_{2\mathbbm{e}}} \tilde{f}^{k}(x).
\end{equation*}
\end{small}

Then the inequality $\abs{\mu(B)- \mu(B_{\pm 2\mathbbm{e}})}<2 \mathbbm{e} \mu(B)$
leads to
\begin{equation*}
 \label{eq_ineq}
  \bigg{|}\frac{1}{q_l}\sum_{k=0}^{q_l-1}{\mathbbm{1}_{B_{\pm \mathbbm{e} }} f_l^{k}(x)- \mu(B_{\pm \mathbbm{e} })}\bigg{|}< 3\mathbbm{e}  \mu(B).
\end{equation*} 
 This implies that $f:=f_l=\mathbbm{U}^{-1}\circ H_{l-1}^{-1}\circ \varphi_{\mathbbm{a}'}\circ H_{l-1}\circ \mathbbm{U}$, with $\mathbbm{a}':= \beta_l$ and $\mathbbm{H}:=H_{l-1}$ satisfy the conclusions of the lemma. Since all the inequalities above hold for all $l$ large enough, we can indeed assume that $q':=q_l$ has been chosen arbitrarily large independently of $\mathbbm{a}$, $\mathbbm{e} , \mathbbmtt{k}, \mathbbm{U}, \mathbbm{m}$ and $\mathbbm{H}$.
\end{proof}

%  Before continuing, let us point out a basic statement concerning two intersecting lines in $\R^2$. For any $s >0$ and $z_0=(x_0,y_0)$, let us define $l_1:=\{ z_0+(t,(1+s^{-1})t), \; t\in \R \}$ and $l_2:=\{z_0+(t,t), \; t\in \R \}$. Let us also denote by $\gamma(s)$ the angle that satisfies 
%  \begin{equation}
%  \label{eq_angle_that}
%    l_1= z_0+ R_{\gamma(s)}(l_2- z_0),
%  \end{equation}
%  where $R_{\gamma(s)}$ is the rotation by $\gamma(s)$ around the origin. Then for any $z_0=(x_0,y_0)\in \R^2$, any segment $I\subset l_2$ satisfies
%  \begin{equation}
%   \label{eq_rel_mesura_angle}
%    \lambda^2(I)= \lambda^2(z_0+R_{\gamma(s)}(I-z_0)) + \mathcal{O}(s^{-1}) \lambda^2(I).
%  \end{equation}

Following the scheme of the construction explained in Section \ref{sec_idea}, 
we want to prove that we can go from \textit{approximate ergodicity} in the sense of Definition \ref{def_approx_ergodic} to \textit{approximate mixing} in the sense of Definition \ref{def_approx_mix} by composing with the map defined in \eqref{eq_g}. In order to do that, we need some technical lemmas.

\vspace{0.2cm}
% $\rho, \delta>0$, $\omega=p/q \in \Q$, $l\in \N$ and $U\in \text{Diff}^{\omega}_{\infty}(\mathbb{S}^3)$

\subsection{Choice of a partition}
\vspace{0.1cm}

 Given $\rho, \delta>0$, $\alpha'=p'/q' \in \Q$, $l\in \N$ and $U\in \text{Diff}^{\omega}_{\infty}(\mathbb{S}^3)$, assume that $\Phi^{m'}_{q',A, \alpha''}$ as in \eqref{eq_phi} and a partial $q'$-decomposition $\eta_{q'}$ have been chosen to satisfy the conclusions of Lemma \ref{lemma_stretching} for some $m'= \tilde{m},  A>0$ and $\alpha''=\tilde{\omega}$, where $\omega= \alpha'$.  Our goal in this section is to define a partition $\mathcal{J}$ of $\Phi^{m'}_{q',A, \alpha''}(I)$ for any given $I\in \eta_{q'}$. The properties that these partitions need to satisfy to be able to conclude the proof of Proposition \ref{prop_dunaalaltra} are given by Lemma \ref{lemma_intervals}, which is the main result of this section.
  Recall that
 $D_{\xi,\tilde c}=\{\theta_2=\theta_1+\tilde c, \theta_1\in \T \} \times \{\xi\}$.
 The purpose of the following lemma is to show that, under these assumptions, the image of any  $I\in \tilde{\eta}_{q',c} \times \{\xi\} \subset \eta_{q'}$ by $\Phi^{m'}_{q',A,\alpha''}$ lies close to a shifted diagonal $D_{\xi, \tilde c}$ for some $0\leq \tilde c <1$.
 \begin{lemma}
 \label{lemma_cosa}
  Under the assumptions above, for every $I\in \tilde{\eta}_{q',c} \times \{\xi\} \subset \eta_{q'}$
 there exists $0\leq \tilde c <1$ and  $z_0\in D_{\xi, \tilde c}\cap\Phi^{m'}_{q',A, \alpha''}(I)$  with
 \begin{equation}
 \label{eq_dist_new}
 \sup_{\theta_1 \in \Pi_{\theta_1} I }\abs{f_2(\theta_1)-f_1(\theta_1)-\tilde c}<10/\sqrt{q'},
 \end{equation}
 where $f_1$ and $f_2$ are as in \eqref{eq_f1} and \eqref{eq_f2}.
 \end{lemma}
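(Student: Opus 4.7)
The plan is to write $f_2 - f_1$ explicitly and bound its oscillation on $\Pi_{\theta_1} I$. Subtracting \eqref{eq_f1} from \eqref{eq_f2}, with $q, \tilde m, \tilde\omega$ there playing the role of $q', m', \alpha''$ here, the $\tilde m\tilde\omega$ terms cancel and we obtain
\begin{equation*}
f_2(\theta_1) - f_1(\theta_1) = \bigl((q')^{-1}-2\bigr)\theta_1 + c + \frac{2 A\, r_1^{q'(q'+1)} r_2^{q'^2}}{q'}\bigl(\cos(2\pi q'^2(2\theta_1-c)) - \tfrac{1}{2}\sigma(\theta_1)\bigr).
\end{equation*}
Fix any $\theta_1^\star \in \Pi_{\theta_1} I$, set $\tilde c := f_2(\theta_1^\star) - f_1(\theta_1^\star) \bmod 1$, and let $z_0 := (f_1(\theta_1^\star), f_2(\theta_1^\star), \xi)$. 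Then $z_0 \in \Phi^{m'}_{q', A, \alpha''}(I) \cap D_{\xi, \tilde c}$, and the lemma reduces to showing that the oscillation of $f_2 - f_1$ on $\Pi_{\theta_1} I$ is less than $10/\sqrt{q'}$.

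The linear contribution $((q')^{-1}-2)\theta_1$ oscillates by at most $2\lambda^1(I) \leq 2(q')^{-3}$ by \eqref{eq_second_cond_dec}, which is negligible. Denote $K := A\, r_1^{q'(q'+1)} r_2^{q'^2}$. Since $\cos(2\pi q'^2(2\theta_1-c))$ and each cosine summand of $\sigma$ have frequency $2\pi q'^2$, the mean value theorem bounds the oscillation of $(2K/q')(\cos - \sigma/2)$ over $\Pi_{\theta_1} I$ by $16\pi K q'\, \lambda^1(\Pi_{\theta_1} I)$. Two ingredients from the proof of Lemma \ref{lemma_stretching} then close the loop: the dominant term in $f_2'$ is the sine analogue of the cosine above, so the bound \eqref{eq_sinus} on $\T \setminus M_{q',c}$ yields $\inf_{\Pi_{\theta_1} I}|f_2'| \gtrsim K q'^{3/2}$; and property $i)$ of Lemma \ref{lemma_stretching} together with monotonicity of $f_2$ on $\Pi_{\theta_1} I$ forces $\lambda^1(\Pi_{\theta_1} I) \leq 1/\inf|f_2'| \lesssim (K q'^{3/2})^{-1}$. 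Substituting gives an oscillation of order $K q' \cdot (K q'^{3/2})^{-1} = (q')^{-1/2}$, which for $q'$ large fits comfortably below $10/\sqrt{q'}$.

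The main subtlety is that $A$, and hence $K$, is very large by the requirements of Lemma \ref{lemma_stretching} (in particular $A \geq C(q')/\rho$), so the naive bound $2 \cdot (2K/q')$ on the oscillation of the cosine term is useless. The key point is that $K$ appears identically in the numerator (from differentiating the high-frequency cosine) and in the denominator (from the lower bound on $\abs{f_2'}$), and cancels exactly, leaving only a polynomial-in-$q'$ factor. It is precisely this cancellation that produces the $(q')^{-1/2}$ bound on the oscillation of $f_2 - f_1$, independent of the size of $A$.
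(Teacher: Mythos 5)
Your proposal is correct and follows essentially the same route as the paper's (terse) proof: pick a point $z_0$ in the image of $I$ to define $\tilde c$, bound $|f_2' - f_1'|$ from above by roughly $16\pi K q'$ where $K = A\,r_1^{q'(q'+1)} r_2^{q'^2}$, bound $\lambda^1(I)$ from above by $1/\inf|f_2'| \lesssim (K q'^{3/2})^{-1}$ via monotonicity and $\lambda(f_2(\Pi_{\theta_1}I))=1$, and apply the mean value theorem so that the large constant $K$ cancels, leaving $\mathcal{O}(q'^{-1/2})$. Your explicit identification of that cancellation is exactly what makes the bound independent of the size of $A$, which the paper leaves implicit; the only imprecision is a cosmetic one in calling the frequency in $\theta_1$ equal to $2\pi q'^2$ (the argument $2\pi q'^2(2\theta_1-c)$ gives a derivative factor $4\pi q'^2$), but your final constant $16\pi K q'$ is the correct one anyway.
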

 
 \begin{proof}
   The proof follows from considering $z_0\in D_{\xi, \tilde c}\cap\Phi^{m'}_{q',A, \alpha''}(I)$, which must exist for some $0\leq \tilde c <1$ and then estimating
 $\sup_{\T \setminus M_{q',c}}\abs{f'_2(\theta_1)-f'_1(\theta_1)}$ and $\lambda^1(I)$ from above using equations \eqref{eq_f1}, \eqref{eq_f2} and \eqref{eq_sinus}. Then by the mean value theorem one obtains the upper bound for $\sup_{\T \setminus M_{q',c}}\abs{f_2(\theta_1)-f_1(\theta_1)-\tilde c}$ in \eqref{eq_dist_new}.
 \end{proof}
 Consider now, for $I\in \eta_{q'}$ with  $\eta_{q'}$ a partial $q'$-decomposition as above, $z_0\in D_{\xi, \tilde c}\cap \Phi^{m'}_{q', A, \alpha''}(I)$ and $\tilde{c}$ as given by Lemma \ref{lemma_cosa}, the partition $\tilde{\mathcal{J}}:= \{\tilde J_k\}_{k=0}^{q'-1}$ of $D_{\xi, \tilde c}$ given by 
 
 \begin{equation*}\tilde{J}_0=\left\{\theta_2=\theta_1+ \tilde{c}, \;  \theta_1 \in \left[z_0-\frac{1}{2q'}, z_0+\frac{1}{2q'}\right)\right\}, \quad \tilde{J}_k:=\varphi^k_{\alpha'}(J_0), \quad 1\leq k \leq q'-1.
 \end{equation*}

\begin{definition}
We call $\mathcal{J}$ the partition of the set $\Phi_{q', A, \alpha''}^{m'}(I)$ given by
\begin{equation}
 \label{eq_partition}
\mathcal{J}
:=\left\{J_k \subset \Phi^{m'}_{q', A, \alpha''}(I) \; |\; \Pi_{\theta_2}(J_k)=
\Pi_{\theta_2}(\tilde{J}_k), \tilde{J}_k \in \tilde{\mathcal{J}} \right\}.
\end{equation}
\end{definition}

The new partition $\mathcal{J}$ is well defined because $f_2$ is monotone on $\Pi_{\theta_1}I$ and $\lambda(f_2(I))=1$ (these properties hold because $I$ belongs to a partial $q'$-decomposition $\eta_{q'}$ as given by Lemma \ref{lemma_stretching}). The reason behind defining the partition $\mathcal{J}$ in this way is that the bound in \eqref{eq_dist_new} will allow us to compare the behaviour of its elements to points of the orbit $\{\varphi^k_{\alpha'}(\nonumber z_0)\}_{k=0}^{q'-1}$.

\vspace{0.2cm}

Having introduced the partitions $\mathcal{J}$, we can now proceed to state Lemma \ref{lemma_intervals}.
%%%%POTSER VOLEM RECUPERAR-LO
%  If for $\alpha=p/q  \in\Q$, $\varepsilon>0$ and $V\in \text{Diff}^{\omega}_{\infty}(\mathbb{S}^3, \mu)$,  $\alpha'= {p'}/{q'}$ was such that there exists $H\in \text{Diff}^{\omega}_{\infty}(\mathbb{S}^3, \mu)$ with 
%  \begin{equation*}
%  \label{eq_f_ergo}
%   f=V^{-1}\circ H^{-1} \circ \varphi_{\alpha'} \circ H \circ V
%  \end{equation*}
% $(q, 2\varepsilon,q')$-ergodic then one can say that, informally speaking, the purpose of the next lemma is to show that, if $q'$ is big enough, then there exist a constant $A>0, m'\in \N, \alpha''\in \Q $ and  a partial $q'$-decomposition $\eta_{q'}$ such that for every $I\in \eta_{q'}$ the elements of the partition $\mathcal{J}$ defined in \eqref{eq_partition} are distributed by $H^{-1} \circ V^{-1}$ among the elements in $\mathcal{C}_q$ as the points of the orbit $\{\varphi_{\alpha'}^k(z_0)\}_{k=0}^{q'-1}$ up to a small error depending on $\varepsilon$ and $q$, and at the same time with $\Phi_{q', A, \alpha''}$ and $\eta_{q'}$ satisfying the conclusions of Lemma \ref{lemma_stretching}.  Let us give a formal statement.
%%%RECUPERAR EL QUE ANAVA FINS AQUI.
\begin{lemma}
\label{lemma_intervals}
  Consider $m\in \N$ and \[f=V^{-1}\circ H^{-1} \circ \varphi_{\alpha'} \circ H \circ V\] $(q, 2\varepsilon,q')$-ergodic for $\alpha=p/q, \alpha'=p'/q' \in \Q,\varepsilon>0, H$ and $V\in \text{Diff}^{\omega}_{\infty}(\mathbb{S}^3, \mu)$ fixed.
  There exists $Q(q,\varepsilon, H,V)>0$ such that if $q'> Q(q,\varepsilon, H,V)$ then for any $\kappa>0$ there exist
  $A>0$, $m' \in \N$, $\alpha''=p''/q'' \in \Q$ and a partial $q'$-decomposition $\eta_{q'}$
  such that:
  \begin{itemize}
   \item[a)] $\Phi_{q', A, \alpha''}$ and $\eta_{q'}$ satisfy the conclusion of Lemma \ref{lemma_stretching} for $\rho=\varepsilon \mu(B)$, $\delta=\kappa$,  $\omega= \alpha'$,  $l=m$, $U= H\circ V$,  $\tilde m=m'$ and $\tilde{\omega}=\alpha''$.
  \item[b)] For any $B\in \mathcal{C}_q$ and $I\in \eta_{q'}$
  the subcollections
  \begin{align*}
  &\mathcal{J}_{B_{\pm \varepsilon}}
  :=\{J_k \in \mathcal{J}\; | \; V^{-1}\circ H^{-1} (J_k)\subset B_{\pm \varepsilon}\}, \; \\
  &\mathcal{J}_{B_{\pm \varepsilon}^c}
  :=\{J_k \in \mathcal{J}\; | \; V^{-1}\circ H^{-1} (J_k)\subset B_{\pm \varepsilon}^c\},
 \end{align*}
with $\mathcal{J}$ as in \eqref{eq_partition} satisfy
 \begin{itemize}
  \item[i)] $\big{|}\sum_{J_k\in \mathcal{J}_{B_{\pm \varepsilon}}}\lambda^2(J_{k})-\mu(B_{\pm \varepsilon}) \big{|}<9\varepsilon \mu(B)$,
  \item[ii)] $\lambda^2(\Phi^{m'}_{q', A, \alpha''}(I)\setminus(\mathcal{J}_{B_{\pm \varepsilon}} \cup \mathcal{J}_{B_{\pm \varepsilon}^c}))< 16 \varepsilon \mu(B)$.
   \end{itemize}
   \end{itemize}
\end{lemma}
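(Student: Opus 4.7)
The plan is to apply Lemma \ref{lemma_stretching} to produce the data, then classify the atoms of the partition $\mathcal{J}$ built around Lemma \ref{lemma_cosa} according to where an associated orbit point of $f$ lies, and finally invoke the $(q,2\varepsilon,q')$-ergodicity of $f$ to count. Concretely, I would first apply Lemma \ref{lemma_stretching} with $\rho = \varepsilon\mu(B) = 2\pi^2\varepsilon q^{-3}$ (which is independent of $B\in\mathcal{C}_q$), $\delta = \kappa$, $\omega = \alpha'$, $l = m$ and $U = H\circ V$, obtaining $A$, $m' = \tilde m$, $\alpha'' = \tilde\omega$ and a partial $q'$-decomposition $\eta_{q'}$ that satisfies item (a) at once. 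For any $I\in\eta_{q'}$ and any $B\in\mathcal{C}_q$, I would then form the partition $\mathcal{J}=\{J_k\}_{k=0}^{q'-1}$ of \eqref{eq_partition} from the point $z_0\in D_{\xi,\tilde c}\cap\Phi^{m'}_{q',A,\alpha''}(I)$ supplied by Lemma \ref{lemma_cosa}, and set $y_0 := V^{-1}\circ H^{-1}(z_0)$ and $y_k := f^k(y_0) = V^{-1}\circ H^{-1}(\varphi^k_{\alpha'}(z_0))$. Each atom then satisfies $\lambda^2(J_k) = 1/q'$.

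The geometric heart of the argument is the comparison between $V^{-1}\circ H^{-1}(J_k)$ and $y_k$. Combining the bound $\lambda^1(I)\le {q'}^{-3}$ (from the partial $q'$-decomposition) with Lemma \ref{lemma_cosa}, every $J_k$ lies in a box of side $O(1/\sqrt{q'})$ around $\varphi^k_{\alpha'}(z_0)$; letting $M = \norm{\text{D}(V^{-1}\circ H^{-1})}_{\mathbb{S}^3}$, the image $V^{-1}\circ H^{-1}(J_k)$ therefore sits in a ball of radius $C_0M/\sqrt{q'}$ around $y_k$ for some absolute constant $C_0$. I would now pick $Q(q,\varepsilon,H,V)$ large enough that $q'>Q$ forces $C_0M/\sqrt{q'}\le \gamma\varepsilon/q^{3/2}$; by \eqref{eq_dis_bs1}--\eqref{eq_dis_bs2} this is exactly the margin separating $\partial B_{\pm 2\varepsilon}$ from $\partial B_{\pm\varepsilon}$, which yields the implications
\begin{align*}
y_k\in B_{-2\varepsilon} &\;\Longrightarrow\; V^{-1}\circ H^{-1}(J_k)\subset B_{-\varepsilon}\subset B_\varepsilon,\\
y_k\in B_{2\varepsilon}^c &\;\Longrightarrow\; V^{-1}\circ H^{-1}(J_k)\subset B_\varepsilon^c\subset B_{-\varepsilon}^c.
\end{align*}
Only the indices $k$ with $y_k\in B_{2\varepsilon}\setminus B_{-2\varepsilon}$ can then remain unclassified.

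Applying the approximate ergodicity of $f$ with $n=q'$ and $x=y_0$ produces $\abs{\#\{0\le k<q': y_k\in B_{\pm 2\varepsilon}\} - q'\mu(B_{\pm 2\varepsilon})}\le 6\varepsilon q'\mu(B)$. Combined with $\mu(B_{2\varepsilon}\Delta B_{-2\varepsilon})<4\varepsilon\mu(B)$ from \eqref{sim_diff_1}, the number of unclassified indices is at most $16\varepsilon q'\mu(B)$, which after multiplication by $\lambda^2(J_k)=1/q'$ yields (ii). For (i), the sandwich $\#\{k:y_k\in B_{-2\varepsilon}\}\le \#\{k:J_k\in\mathcal{J}_{B_\varepsilon}\}\le \#\{k:y_k\in B_{2\varepsilon}\}$, together with $\abs{\mu(B_{\pm 2\varepsilon})-\mu(B)}<2\varepsilon\mu(B)$ and $\abs{\mu(B_{\pm\varepsilon})-\mu(B)}<\varepsilon\mu(B)$ from \eqref{sim_diff_2}, yields after dividing by $q'$ that $\abs{\sum_{J_k\in\mathcal{J}_{B_\varepsilon}}\lambda^2(J_k)-\mu(B_\varepsilon)}\le 9\varepsilon\mu(B)$; the analogous bound for $B_{-\varepsilon}$ is symmetric. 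The main obstacle is the coordination of three very different length scales --- the $1/\sqrt{q'}$ deviation of the stretched curve from the diagonal (Lemma \ref{lemma_cosa}), the $\gamma\varepsilon q^{-3/2}$ margin of $B_{\pm\varepsilon}$ (Lemma \ref{lemma_distancies}), and the fixed distortion $M$ --- which forces $Q\gtrsim M^2 q^3/(\gamma\varepsilon)^2$ and explains why the threshold depends on $H,V$ while remaining independent of $\kappa$ and of the parameters $A, m', \alpha''$ subsequently produced by Lemma \ref{lemma_stretching}.
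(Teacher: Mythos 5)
Your proof is correct and follows essentially the same route as the paper: apply Lemma \ref{lemma_stretching} once for part (a), then classify the atoms $J_k$ of the partition $\mathcal{J}$ by comparing $V^{-1}\circ H^{-1}(J_k)$ with the orbit points $y_k = V^{-1}\circ H^{-1}(\varphi_{\alpha'}^k(z_0))$ via Lemma \ref{lemma_cosa}, and finally invoke $(q,2\varepsilon,q')$-ergodicity together with Lemma \ref{lemma_distancies} to obtain the counts in (i) and (ii). Your explicit bound $C_0 M/\sqrt{q'}\le\gamma\varepsilon/q^{3/2}$ and the resulting threshold $Q\gtrsim M^2q^3/(\gamma\varepsilon)^2$ usefully quantify what the paper leaves as ``uniform continuity of $V^{-1}\circ H^{-1}$'' and correctly explain why $Q$ can be chosen before $\kappa$, $A$, $m'$, $\alpha''$ are produced.
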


\begin{remark}
  \label{remark_referee_not_wrap}
 Notice that in the lemma above it is essential that $\eta_{q'}$ satisfies the conclusions of Lemma \ref{lemma_stretching}, because this is a prerequisite for the partitions $\mathcal{J}$ to be well defined. Thus this is not only an extra feature of Lemma \ref{lemma_intervals}, the very same statement would not be well defined without it. 
\end{remark}

\begin{proofof}{Lemma \ref{lemma_intervals}}
Let us divide the proofs into part $a)$ and part $b)$. \newline

\textbf{\textit{Proof of} $a)$.}
 It follows from Lemma \ref{lemma_stretching} that for $\rho=\varepsilon \mu(B), \delta = \kappa$, $\omega=\alpha'=p'/q'$, $l=m$ and $U=H\circ V$, we can obtain  $m'=\tilde m$, $\alpha''=\tilde \omega$, $A>0$ and a partial $q'$-decomposition $\eta_{q'}$  such that $\Phi_{q', A, \alpha''}$ and $\eta_{q'}$ satisfy the conclusions of Lemma \ref{lemma_stretching}. This proves $a)$.\newline
 
 \textbf{\textit{Proof of} $b)$.} It is left to verify that with our choice of parameters when applying Lemma \ref{lemma_stretching} as above, $b)$ is also satisfied. For any $I\in\eta_{q'}$, $B\in \mathcal{C}_q$, let us show that the collections of intervals $\mathcal{J}_{B_{\pm \varepsilon}}$, $\mathcal{J}_{B_{\pm \varepsilon}^c}$ satisfy $i)$ and $ii)$ if $q'$ is assumed to be sufficiently large with respect to an appropriately chosen constant $Q(q,\varepsilon, H, V)$ depending  on $q, \varepsilon, H$ and $V$. \newline
 
 It follows from the uniform continuity of $V^{-1}\circ H^{-1}$ and \eqref{eq_dist_new} that for $q'$ big enough with respect to $\varepsilon^{-1}$,
%  given any interval $\tilde J_k \in \tilde{\mathcal{J}}$, 
 if $V^{-1}\circ H^{-1}(\varphi^k_{\alpha'}(z_0))\in B_{-2\varepsilon}$ then we have that $J_k \in \mathcal{J}_{B_{-\varepsilon}}$, and if
  $V^{-1}\circ H^{-1}(\varphi^k_{\alpha'}(z_0))\in B_{2\varepsilon}^c $ then $J_k \in \mathcal{J}_{B_{\varepsilon}^c}$.
  Let us fix $q'$ bigger than a constant $Q(q,\varepsilon, H,V)$ such that the latter holds.
 This leads to the inequalities
 \begin{equation}
 \label{eq_birkhoff}
  \frac{1}{ q'}\sum_{k=0}^{q'-1}{\mathbbm{1}_{B_{-2\varepsilon}}(V^{-1}\circ H^{-1} (\varphi_{\alpha'}^k(z_0)))}
  \leq \frac{1}{q'} \# \mathcal{J}_{B_{\pm \varepsilon}} \leq \frac{1}{q'}\sum_{k=0}^{q'-1}{\mathbbm{1}_{B_{2\varepsilon}}(V^{-1} \circ H^{-1} (\varphi_{\alpha'}^k(z_0)))}.
 \end{equation}
 Notice also that \[\sum_{J_k\in \mathcal{J}_{B_{\pm \varepsilon}}}\lambda^2(J_{k})=\frac{1}{q'} \# \mathcal{J}_{B_{\pm \varepsilon}}.\]
Therefore using that $f$ is $(q, 2\varepsilon, q')$-ergodic and Lemma \ref{lemma_distancies}, we obtain
 \begin{align*}
 &\big{|}\sum_{J_k\in \mathcal{J}_{B_{\pm \varepsilon}}}\lambda^2(J_{k})-\mu(B_{\pm \varepsilon}) \big{|}  
 \leq \max_{\pm} \bigg{|}
 \frac{1}{q'}\sum_{k=0}^{q'-1}{\mathbbm{1}_{B_{\pm 2\varepsilon}}V^{-1}\circ H^{-1}(\varphi_{\alpha'}^k(z_0))- \mu(B_{\pm 2\varepsilon})} \bigg{|} \\
 &+\abs{\mu(B_{\mp \varepsilon})-\mu(B_{\pm 2 \varepsilon})} 
 \leq 9\varepsilon \mu(B),
 \end{align*}
  which proves $i)$.
  The measure of $\Phi^{m'}_{q', A, \alpha''}(I)\setminus(\mathcal{J}_{B_{\pm \varepsilon}} \cup \mathcal{J}_{B_{\pm \varepsilon}^c})$ satisfies 
 \begin{align*} 
 &\lambda^2(\Phi^{m'}_{q',A, \alpha''}(I)\setminus(\mathcal{J}_{B_{\pm \varepsilon}} \cup \mathcal{J}_{B_{\pm \varepsilon}^c}))\\
 &\leq \frac{1}{q'}\sum_{k=0}^{q'-1}{\mathbbm{1}_{B_{2\varepsilon}}(V^{-1}\circ H^{-1}( \varphi_{\alpha'}^k(z_0)))
 - \mathbbm{1}_{B_{-2\varepsilon}}(V^{-1} \circ H^{-1} (\varphi_{\alpha'}^k(z_0)))}\\& \leq
 \mu(B_{2\varepsilon} \Delta B_{-2\varepsilon})+ 6\varepsilon \mu(B)+ 6\varepsilon \mu(B)
 \leq 16 \varepsilon \mu(B),
 \end{align*}
 where we have used again Lemma \ref{lemma_distancies} and the fact that $f$ is $(q,2\varepsilon, q')$-ergodic. This proves $ii)$ and thus finishes the proof.
\end{proofof}

\subsection{Conclusion of the proof for Proposition \ref{prop_dunaalaltra}} 
We are now finally ready to prove Proposition \ref{prop_dunaalaltra}.

\begin{proofof}{Proposition \ref{prop_dunaalaltra}}
We apply Lemma \ref{lemma_ergo} for $\mathbbm{a}=\alpha=p/q$, $\mathbbm{e}=2\varepsilon, \mathbbmtt{k}=\varepsilon/2 , \mathbbm{U}=V$ and $\mathbbm{m}=m$
to obtain $\alpha'=\mathbbm{a}'=p'/q'\in \Q$ and $H=\mathbbm{H} \in \text{Diff}^{\omega}_{\infty}(\mathbb{S}^3, \mu)$ such that \[f=V^{-1}\circ H^{-1} \circ \varphi_{\alpha'} \circ H \circ V\] is $(q, 2\varepsilon, q')$-ergodic and \begin{equation}
\abs{f^{j}-V^{-1} \circ \varphi^j_\alpha \circ V}_{\Delta}<\varepsilon/2, \; 0 < j\leq m.
           \label{eq_eieiei}                                                                                                                                                                                                                                    \end{equation}
It is clear that now we are in a position in which we can apply Lemma \ref{lemma_intervals} to $m$ and $f$.
Also by Lemma \ref{lemma_ergo} we can assume, for $q, \varepsilon, H$ and $V$ as above and any $L>0$, that $q'>q+L$ and $q'>Q(q,\varepsilon, H,V)$ for $Q(q,\varepsilon, H,V)>0$ as in the statement of Lemma \ref{lemma_intervals}. Therefore when we apply Lemma \ref{lemma_intervals} to $m$ and $f$ as above, we obtain that for
\begin{equation*}
 \label{eq_kapa_3}
 \kappa=\varepsilon/2
\end{equation*}
there exist $A>0$, $\tilde m:=m'\in \N$, $\tilde \alpha :=\alpha''=p''/q''$ ($q''>q'$) and a partial $q'$-decomposition $\eta_{q'}$ such that for every $B\in \mathcal{C}_q$ and $I\in \eta_{q'}$  
we have collections of intervals $ \mathcal{J}_{B_{\pm \varepsilon}}$, $\mathcal{J}_{B_{\pm \varepsilon}^c}$  satisfying the conclusions of Lemma \ref{lemma_intervals}, which in particular imply that $\Phi_{q', A, \tilde{\alpha}}$ satisfies the conclusions of Lemma \ref{lemma_stretching} for $\omega= \alpha'$, $\tilde{\omega}=\tilde{\alpha}$, $\rho=\varepsilon \mu(B)$, $\tilde{m}$, $\delta=\kappa$, $l=m$ and $U=H \circ V$. Recall that
\[F=V^{-1}\circ H^{-1}\circ \Phi_{q',A,\tilde{\alpha}}\circ H \circ V. \]
So far we have clarified our choice of parameters and shown that \eqref{eq_fer_dist} is satisfied. We need to show that $i)$ and $ii)$ in the statement of Proposition \ref{prop_dunaalaltra} hold for our choice of parameters.\newline 

\textbf{\textit{Proof of} } $i)$. Let us define $\tilde{H}:=g_{q',A} \circ H$.
Using that $g^{-1}_{q', A}\circ \varphi^j_{\alpha'} \circ g_{q', A}= \varphi^j_{\alpha'}$, \eqref{eq_eieiei} and $iv)$ in Lemma \ref{lemma_stretching}, we obtain
\begin{align*}
 \abs{F^{j}- V^{-1}\circ \varphi_{\alpha}^{j}\circ V}_{\Delta}
 &\leq \abs{F^{j}-
 V^{-1}\circ \tilde{H}^{-1} \circ  \varphi^j_{\alpha'}
 \circ \tilde{H}\circ V}_{\Delta}+
 \abs{f^j- V^{-1}\circ \varphi^j_{\alpha} \circ V}_{\Delta} \\
 &\leq \varepsilon/2 + \varepsilon/2 = \varepsilon,
\end{align*}
for $0< j \leq m$.  \newline 

\textbf{\textit{Proof of}} $ii)$. It remains to show that $F$ is $(q, q', \varepsilon, \tilde m, H \circ V)$-mixing, i.e. that for all $I \in \eta_{q'}$ and $B\in \mathcal{C}_q$
 \begin{equation}
 \label{eq_want_prove}
  \bigg{|} \cfrac{\lambda^1(I \cap \Phi^{-\tilde m}(H \circ V (B_{\pm \varepsilon})))}{\lambda^1(I)}-\mu(B_{\pm \varepsilon})\bigg{|}\leq 30\varepsilon \mu(B),
 \end{equation}
where $\Phi:=\Phi_{q',A,\tilde \alpha}$. Consider the subcollections of intervals $\mathcal{J}_{B_{\pm \varepsilon}}$ and $\mathcal{J}_{B_{\pm \varepsilon}^c}$ given by Lemma \ref{lemma_intervals}. From $i)$ in the same lemma we obtain
{\small
\begin{align}
 &\bigg{|} \cfrac{\lambda^1(I \cap \Phi^{-\tilde m}(H\circ V(B_{\pm \varepsilon})))}{\lambda^1(I)}-\mu(B_{\pm \varepsilon})\bigg{|} \nonumber \\ & \leq
 {\bigg{|} \cfrac{\lambda^{1}(I \cap \Phi^{-\tilde m}(H\circ V(B_{\pm \varepsilon})))}{\lambda^1(I)}-\sum_{J\in \mathcal{J}_{B_{\pm \varepsilon}}}\lambda^2(J)\bigg{|}} \nonumber 
 +{\bigg{|} \sum_{J\in \mathcal{J}_{B_{\pm \varepsilon}} }\lambda^2(J)-\mu(B_{\pm \varepsilon}) \bigg{|}} \\ &
\leq  {\bigg{|} \cfrac{\lambda^1(I \cap \Phi^{-\tilde m}(H\circ V(B_{\pm \varepsilon})))}{\lambda^1(I)}-\sum_{J\in \mathcal{J}_{B_{\pm \varepsilon}}}\lambda^2(J)\bigg{|}}+  9\varepsilon \mu(B). \label{eq_final_quasi}
\end{align}
}
It follows from the definition of the sets $\mathcal{J}_{B_{\pm \varepsilon}}$ and $\mathcal{J}_{B_{\pm \varepsilon}^c}$  in Lemma \ref{lemma_intervals} that 

\begin{equation*}
 \bigcup_{J \in \mathcal{J}_{B_{\pm \varepsilon}}} I \cap \Phi^{-\tilde m}(J) \subseteq I \cap \Phi^{-\tilde m}(H\circ V(B_{\pm \varepsilon})) \subseteq \bigcup_{J \in \mathcal{J} \setminus \mathcal{J}_{B_{\pm \varepsilon}^c}} I \cap \Phi^{-\tilde m}(J),
\end{equation*}

and hence we can conclude from $ii)$ in Lemma \ref{lemma_intervals} and 
\eqref{eq_stretch_phi} in Lemma \ref{lemma_stretching} that
{\small
\begin{align*}
 &{\bigg{|} \cfrac{\lambda^1(I \cap \Phi^{-\tilde m}(H \circ V (B_{\pm \varepsilon})))}{\lambda^1(I)}-\sum_{J\in \mathcal{J}_{B_{\pm \varepsilon}} }\lambda^2(J)\bigg{|}} \\
 & \leq \max \left\{\bigg{|}\frac{1}{\lambda^1(I)}\sum_{J\in \mathcal{J}_{B_{\pm \varepsilon}}}
 \lambda^1(I \cap \Phi^{-\tilde m}(J))-
   \sum_{J\in \mathcal{J}_{B_{\pm \varepsilon}}}\lambda^2(J)\bigg{|}, \right.\\
   & \hspace{3.5cm} \left. \bigg{|} \frac{1}{\lambda^1(I)} \sum_{J\in \mathcal{J} \setminus \mathcal{J}_{B_{\pm \varepsilon}^c}} \lambda^1(I \cap \Phi^{-\tilde m}(J))-
   \sum_{J\in \mathcal{J}_{B_{\pm \varepsilon}}}\lambda^{2}(J)\bigg{|} \right\}\\
   &\leq \max \left\{\bigg{|}  \frac{1}{\lambda^1(I)} \sum_{J\in \mathcal{J}_{B_{\pm \varepsilon}}}\lambda^1(I \cap \Phi^{-\tilde m}(J))-
   \sum_{J\in \mathcal{J}_{B_{\pm \varepsilon}}}\lambda^2(J)\bigg{|}, \right. \\ 
   & \hspace{3cm} \left. \bigg{|} \frac{1}{\lambda^1(I)} \sum_{J\in \mathcal{J} \setminus \mathcal{J}_{B_{\pm \varepsilon}^c}} \lambda^1(I \cap \Phi^{-\tilde m}(J)))-
   \sum_{J\in  \mathcal{J} \setminus \mathcal{J}_{B_{\pm \varepsilon}^c}}\lambda^2(J)\bigg{|} \right\}\\
   \\
   &+ \lambda^2(\Phi^{\tilde m} (I) \setminus (\mathcal{J}_{B_{\pm \varepsilon}^c}\cup \mathcal{J}_{B_{\pm \varepsilon}}))
   \leq \sum_{J\in \mathcal{J} \setminus \mathcal{J}_{B_{\pm \varepsilon}^c}} \bigg{|}\frac{\lambda^1(I \cap \Phi^{-\tilde m}(J))}{\lambda^1(I)}-\lambda^2(J)\bigg{|}\\&
     +16\varepsilon \mu(B)
    \leq  \#(\mathcal{J} \setminus \mathcal{J}_{B_{\pm \varepsilon}^c}) \varepsilon \mu(B) \lambda^2(J) + 16\varepsilon \mu(B) \leq \varepsilon \mu(B) 
   +16\varepsilon \mu(B) \\& \leq  17\varepsilon \mu(B).
\end{align*}
}
Now using \eqref{eq_final_quasi} we finally obtain
\begin{equation*}
 \bigg{|} \cfrac{\lambda^1(I \cap \Phi^{-\tilde m}( H\circ V(B_{\pm \varepsilon})))}{\lambda^1(I)}-\mu(B_{\pm \varepsilon})\bigg{|}\leq 30\varepsilon \mu(B)
\end{equation*} 
and this finishes the proof.
\end{proofof}

\section{Scheme of the proof for higher dimensional cases}
\label{sec_high}
Let us finish by explaining how to modify the setting of the proof to cover the higher dimensional cases. First of all, 
one can generalize the Hopf coordinates to parametrize the odd dimensional sphere $\mathbb{S}^{2n-1}\subset \C^{n}$. In complex coordinates the points $(z_1, \ldots z_n) \in \mathbb{S}^{2n-1}$, with $z_i=x_i+i y_i=r_ie^{2\pi i \theta_i}$ for $i=1, \ldots, n$,  $\theta=(\theta_1, \ldots, \theta_n) \in \T^d$ are such that the moduli  $r= (r_1, \ldots, r_n)$ belong to the subset of the of the $n$ dimensional sphere
${ S_n^+:=\{ r \in \R^n_{+} \; | \; \sum_{i=1}^{n}{r_i^2}=1, \; 0 \leq r_i \leq 1 \} }$. The moduli variables can then be parametrized in spherical coordinates to obtain the parametrization

\begin{align*}
 & z_1= \cos(\xi_1) e^{2\pi i \theta_1}, \\
 & z_2= \sin(\xi_1) \cos(\xi_2) e^{2\pi i \theta_2}, \\
 &\vdots \\
 & z_{n-1}= \sin(\xi_1) \ldots \sin(\xi_{n-2}) \cos(\xi_{n-1}) e^{2\pi i \theta_{n-1}},   \\
 & z_n= \sin(\xi_1) \ldots \sin(\xi_{n-2})\sin(\xi_{n-1})e^{2\pi i \theta_n} \\
\end{align*}

with $\xi_i \in [0, \pi/2]$ for $i=1, \ldots, n-1$ and $\theta=(\theta_1, \ldots, \theta_n)\in \T^n$.
It follows from a computation that in these new variables the volume $\mu$ can be expressed (we denote again the parametrization by $\psi$ ) as

\begin{equation*}
  \mu(A)= \int_{\psi^{-1}(A)}{f(\xi_1, \ldots, \xi_{n-1}) \; d\theta_1 \ldots d{\theta_n} \; d \xi_1 \ldots d \xi_{n-1}}
\end{equation*}
for some smooth map $f:[0, \pi/2]^{n-1} \to \R$, and so that for sets $A\subset \mathbb{S}^{2n-1}$ with $\psi^{-1}(A)=A_1 \times A_2 \in \mathbb{T}^n \times [0, \pi/2]^{n-1}$ with
$A_1, A_2$ Lebesgue measurable we have that the volume can be expressed as 
\begin{equation*}
 \mu(A)= \bar \lambda(A_1) \times \mu_r(A_2),
 \; \text{where} \; \mu_r(A_2):= \int_{A_2}{f(\xi) d \xi}
\end{equation*}
and $\bar{\lambda}$ denotes the Lebesgue measure on $\T^n$.
The map $\varphi_{\alpha}: \mathbb{S}^{2n-1} \to \mathbb{S}^{2n-1}$ can be analogously defined as
\begin{equation*}
 \varphi_{\alpha}(z_1, \ldots, z_n)= (e^{2\pi i \alpha}z_1, \ldots, e^{2\pi i \alpha}z_n).
\end{equation*}
The maps $g_{q,A}$ can be again defined as
\begin{equation*}
g_{q,A}(z)=\zeta_{q}^{A\chi_{q}(z)}(z), \;  A>0,
\end{equation*}
with $\zeta_q^s(z)=(e^{2\pi i s}z_1, e^{2\pi i (1+q^{-1})s } z_2,  \ldots, e^{2\pi i (1+q^{-1}) s}z_n)$
and $\chi_q$ as in \eqref{eq_nu}. Notice that in this case it is enough that the map $\chi_q$ depends only on $z_1, z_2$ in order to obtain stretching in all the angle variables.  In particular in Lemma \ref{lemma_stretching} the stretching happens in all the angle variables with respect to the first component $\theta_1$. The rest of the proof follows as in the case of $\mathbb{S}^3$ up to minor modifications in the technical lemmas, the expression for the decompositions and their specific constants.

 \vspace{0.2cm}
 \noindent
 \textbf{Acknowledgements:} I am grateful to Bassam Fayad and Maria Saprykina for communicating this problem to me and for their valuable suggestions and advice. Support is acknowledged from the Swedish Research Council (VR 2015-04012).

\bibliographystyle{abbrv.bst}
 \bibliography{referencies.bib}

\begin{thebibliography}{1}

\bibitem{AK}
D.~V. Anosov and A.~B. Katok.
\newblock New examples in smooth ergodic theory. {E}rgodic diffeomorphisms.
\newblock {\em Trudy Moskov. Mat. Ob\v{s}\v{c}.}, 23:3--36, 1970.

\bibitem{BANERJEE_2018}
S.~Banerjee and P.~Kunde.
\newblock Real-analytic abc constructions on the torus.
\newblock {\em Ergodic Theory and Dynamical Systems}, 39(10):2643–2688, Jan
  2018.

\bibitem{faka}
B.~Fayad and A.~Katok.
\newblock Constructions in elliptic dynamics.
\newblock {\em Ergodic Theory Dynam. Systems}, 24(5):1477--1520, 2004.

\bibitem{FK}
B.~Fayad and A.~Katok.
\newblock Analytic uniquely ergodic volume preserving maps on odd spheres.
\newblock {\em Comment. Math. Helv.}, 89(4):963--977, 2014.

\bibitem{FS}
B.~Fayad and M.~Saprykina.
\newblock Weak mixing disc and annulus diffeomorphisms with arbitrary
  {L}iouville rotation number on the boundary.
\newblock {\em Ann. Sci. \'{E}cole Norm. Sup. (4)}, 38(3):339--364, 2005.

\bibitem{Fayad}
B.~R. Fayad.
\newblock Analytic mixing reparametrizations of irrational flows.
\newblock {\em Ergodic Theory Dynam. Systems}, 22(2):437--468, 2002.

\bibitem{Kunde}
P.~Kunde.
\newblock Real-analytic weak mixing diffeomorphisms preserving a measurable
  {R}iemannian metric.
\newblock {\em Ergodic Theory Dynam. Systems}, 37(5):1547--1569, 2017.

\bibitem{Sap}
M.~Saprykina.
\newblock Analytic nonlinearizable uniquely ergodic diffeomorphisms on {$\Bbb
  T^2$}.
\newblock {\em Ergodic Theory Dynam. Systems}, 23(3):935--955, 2003.

\bibitem{Sklover}
M.~D. \v{S}klover.
\newblock Classical dynamical systems on the torus with continuous spectrum.
\newblock {\em Izv. Vys\v{s}. U\v{c}ebn. Zaved. Matematika}, 1967(10
  (65)):113--124, 1967.

\end{thebibliography}

\end{document}